\newenvironment{proof}[1][\proofname]{\par\normalfont
  \topsep6pt plus6pt\trivlist\item[\hskip\labelsep\itshape
  #1\@addpunct{:}]\ignorespaces}{\qed\endtrivlist}
\newcommand{\proofname}{Proof}
\DeclareRobustCommand{\qed}{%
  \ifmmode
  \else\leavevmode\unskip\penalty9999\hbox{}\nobreak\hfill\fi
  \quad\hbox{\qedsymbol}}
\newcommand{\qedsymbol}{\openbox}
\newcommand{\openbox}{\leavevmode\hbox to.77778em{%
    \hfil\vrule\vbox to.675em{%
      \hrule width.6em\vfil\hrule}\vrule\hfil}}
\newcommand{\Prb}{\mathsf{P}}\newcommand{\Exp}{\mathsf{E}}
\newcommand{\Lpl}{\mathcal{L}}\newcommand{\Mgf}{\mathcal{M}}
\newcommand{\dd}{\mathrm{d}}\newcommand{\ee}{\mathrm{e}}
\newcommand{\C}{\mathbb{C}}\newcommand{\R}{\mathbb{R}}
\newcommand{\N}{\mathbb{N}}\newcommand{\Z}{\mathbb{Z}}
\newcommand{\B}{\mathcal{B}}
\newcommand{\SIR}{\mathsf{SIR}}
\newcommand{\Gam}{\mathrm{Gam}}
\newcommand{\Cell}{\mathcal{C}}
\newcommand{\bsym}[1]{\boldsymbol{#1}}
\newcommand{\ind}[1]{\boldsymbol{1}_{#1}}
\let\Bar\overline\let\Tilde\widetilde
\let\geqsl\ge\let\leqsl\le
\newtheorem{proposition}{Proposition}[section]
\newtheorem{theorem}{Theorem}[section]
\newtheorem{corollary}{Corollary}[section]
\newtheorem{lemma}{Lemma}[section]
\newtheorem{assumption}{Assumption}[section]
\newtheorem{remark}{Remark}[section]
\begin{document}\sloppy\allowbreak\allowdisplaybreaks

\title{Tail asymptotics of signal-to-interference ratio distribution
  in spatial cellular network models}
\author{Naoto Miyoshi$^{\text{a)}}$\thanks{Research supported by JSPS
    Grant-in-Aid for Scientific Research~(C) 16K00030.}
  \and Tomoyuki Shirai$^{\text{b)}}$\thanks{Research supported by JSPS
    Grant-in-Aid for Scientific Research~(B) 26287019.}
}
\date{%
  \textit{Dedicated to Tomasz Rolski on the occasion of his 70th
    birthday}\\[3ex]
  $^{\text{a)}}$Department of Mathematical and Computing Science\\
  Tokyo Institute of Technology\\[1ex]
  $^{\text{b)}}$Institute of Mathematics for Industry\\
  Kyushu University
}  

\maketitle

\begin{abstract}
We consider a spatial stochastic model of wireless cellular networks,
where the base stations~(BSs) are deployed according to a simple and
stationary point process on $\R^d$, $d\geqsl 2$.
In this model, we investigate tail asymptotics of the distribution of
signal-to-interference ratio~(SIR), which is a key quantity in
wireless communications.
In the case where the path-loss function representing signal
attenuation is unbounded at the origin, we derive the exact tail
asymptotics of the SIR distribution under an appropriate sufficient
condition.
While we show that widely-used models based on a Poisson point process
and on a determinantal point process meet the sufficient condition, we
also give a counterexample violating it.
In the case of bounded path-loss functions, we derive a
logarithmically asymptotic upper bound on the SIR tail distribution
for the Poisson-based and $\alpha$-Ginibre-based models.
A logarithmically asymptotic lower bound with the same order as the
upper bound is also obtained for the Poisson-based model.
\\
\textbf{2010 AMS Mathematics Subject Classification: } Primary:~60G55;
Secondary:~90B18.\\
\textbf{Key words and phrases: } Spatial point processes, cellular
networks, tail asymptotics, signal-to-interference ratio,
determinantal point processes.
\end{abstract}

\section{Introduction and model description}\label{sec:intro}

In this paper, we consider a spatial stochastic model of downlink
cellular networks described as follows.
Let $\Phi=\{X_i\}_{i\in\N}$ denote a point process on $\R^d$, $d\geqsl 2$
(mostly $d=2$ is supposed), where the points are ordered according to
the distance from the origin such that $|X_1| \leqsl |X_2| \leqsl
\cdots$.
Each point~$X_i$, $i\in\N$, represents the location of a base
station~(BS) of the cellular network and we refer to the BS located at
$X_i$ as BS~$i$.
The point process~$\Phi$ is assumed to be simple almost surely in
probability~$\Prb$ ($\Prb$-a.s.) and stationary in $\Prb$ with
positive and finite intensity~$\lambda = \Exp\Phi([0,1]^d)$.
Assuming further that all the BSs transmit signals at the same power
level and each user is associated with the nearest BS, we focus on a
typical user located at the origin~$o=(0,0,\ldots,0)\in\R^d$.
For each $i\in\N$, let $H_i$ denote a nonnegative random variable
representing the propagation effect of fading and shadowing on the
signal from BS~$i$ to the typical user, where $H_i$, $i\in\N$, are
mutually independent and identically distributed~(i.i.d.), as well as
independent of the point process~$\Phi$.
The path-loss function representing attenuation of signals with
distance is denoted by $\ell$, which is a nonincreasing function
satisfying $\int_\epsilon^\infty r^{d-1}\,\ell(r)\,\dd r < \infty$ for
any $\epsilon>0$.
What we have in mind is, for example, $\ell(r) = r^{-d\beta}$ or
$\ell(r) = (1 + r^{d\beta})^{-1}$ with $\beta>1$, the former of which
is an example of unbounded path-loss functions and the latter is
bounded.

In this model, the signal-to-interference ratio~(SIR) for the typical
user is defined as
\begin{equation}\label{eq:SIR}
  \SIR_o = \frac{H_1\,\ell(|X_1|)}
                {\sum_{i=2}^\infty H_i\,\ell(|X_i|)},
\end{equation}
where we recall that $X_1$ is the nearest point of $\Phi$ from the
origin and the typical user at the origin is associated with BS~$1$ at
$X_1$.
We can see that $\SIR_o$ in \eqref{eq:SIR} is invariant to the
intensity~$\lambda$ of the point process~$\Phi$.
While SIR is a key quantity in design and analysis of wireless
networks, spatial cellular network models where the SIR distribution
is obtained exactly in a closed-form or a numerically computable form
are limited (see, e.g., \cite{AndrBaccGant11,MiyoShir14a}).
In addition, even when it is numerically computable, the actual
computation can be time-consuming~(\cite{MiyoShir14a}).
Several researchers therefore resort to some approximation and/or
asymptotic approaches recently (see, e.g.,
\cite{BlasKarrKeel15,GantHaen16,GuoHaen15,Haen14,KeelRossXiaBlas16,MiyoShir14b,MiyoShir16,NagaMiyoShir14}).
In the current paper, we investigate tail asymptotics of the SIR
distribution; that is, the asymptotic behavior of
$\Prb(\SIR_o>\theta)$ as $\theta\to\infty$, for the two cases where
the path-loss function is unbounded at the origin and where it is
bounded.
The part of the unbounded path-loss function is a slight refinement of
\cite{MiyoShir16} and we derive the exact tail asymptotics of the SIR
distribution under an appropriate sufficient condition.
While we show that the widely-used models on $\R^2$, where the BS
configuration~$\Phi$ is given as a homogeneous Poisson point process
and where $\Phi$ is a stationary and isotropic determinantal point
process, meet the sufficient condition, we also give a counterexample
violating it.
For the case of bounded path-loss functions, we derive a
logarithmically asymptotic upper bound on the SIR tail distribution
for the homogeneous Poisson-based and $\alpha$-Ginibre based-models,
where $\alpha$-Ginibre point processes are one of the main examples of
stationary and isotropic determinantal point processes on
$\C\simeq\R^2$.
We also derive a logarithmically asymptotic lower bound with the same
order as the upper bound for the homogeneous Poisson-based model.

The paper is organized as follows.
In the next section, we consider the BS configuration~$\Phi$ as a
general simple and stationary point process on $\R^d$ and the
path-loss function~$\ell$ as $\ell(r)=r^{-d\beta}$, $r>0$.
In this case, we derive $\Prb(\SIR_o>\theta)\sim c\,\theta^{-1/\beta}$
as $\theta\to\infty$ for some constant~$c\in(0,\infty)$ under an
appropriate sufficient condition.
In Section~\ref{sec:examples}, we show that the widely-used
Poisson-based and determinantal-based models on $\R^2$ meet the
sufficient condition while we also give a counterexample to it.
In Section~\ref{sec:bounded}, we consider bounded and regularly
varying path-loss functions and derive a logarithmically asymptotic
upper bound on $\Prb(\SIR_o>\theta)$ as $\theta\to\infty$ when the
propagation effect distribution is light-tailed and $\Phi$ is a
homogeneous Poisson point process or an $\alpha$-Ginibre point
process.
When $\Phi$ is a homogeneous Poisson point process and the propagation
effects are exponentially distributed, a logarithmically asymptotic
lower bound with the same order as the upper bound is also obtained.

\section{Tail asymptotics for unbounded path-loss
  models}\label{sec:asymptotic_result}

In this section, we consider the path-loss function $\ell(r) =
r^{-d\beta}$, $r>0$, and derive $\Prb(\SIR_o>\theta) \sim
c\,\theta^{-1/\beta}$ as $\theta\to\infty$ with some constant $c>0$
under an appropriate set of conditions.
Prior to providing the main theorem, we need a short preliminary.

Let $\Prb^o$ and $\Exp^o$ denote respectively the Palm probability and
the corresponding expectation with respect to the marked point
process~$\Phi_H = \{(X_i, H_i)\}_{i\in\N}$ viewed at the origin (see,
e.g., \cite[Sec.~1.4]{BaccBlas09a} or \cite[Chap.~13]{DaleVere08}).
Note that, due to the independence of $\Phi=\{X_i\}_{i\in\N}$ and
$\{H_i\}_{i\in\N}$, we have $\Prb^o(H_1\in C) = \Prb(H_1\in C)$ for
any $C\in\B(\R_+)$.
When we consider the point process~$\Phi$ under the Palm
distribution~$\Prb^o$, we use index~$0$ for the point at the origin;
that is, $X_0=o=(0,0,\ldots,0)\in\R^d$ under $\Prb^o$.
For the point process~$\Phi$ and a point~$X_i$ of $\Phi$, the Voronoi
cell of $X_i$ with respect to $\Phi$ is defined as
\[
  \Cell(X_i)
  = \{x\in\R^d: |x-X_i| \leqsl |x-X_j|, X_j\in\Phi\};
\]
that is, the set of points in $\R^d$ whose distance to $X_i$ is not
greater than that to any other points of $\Phi$.
The typical Voronoi cell is then $\Cell(o)$ under the Palm
distribution~$\Prb^o$ and its circumscribed radius, denoted by $R(o)$,
is the radius of the smallest ball centered at the origin and
containing $\Cell(o)$ under $\Prb^o$.

\begin{theorem}\label{thm:general}
For the cellular network model described in the preceding section with
the path-loss function~$\ell(r) = r^{-d\beta}$, $r>0$, we suppose the
following.
\begin{enumerate}[(A)]
\item\label{condA}
  For the propagation effects~$H_i$, $i\in\N$,
  $\Exp({H_1}^{1/\beta})<\infty$ and there exist $p>0$ and $c_H>0$
  such that the Laplace transform $\Lpl_H$ of $H_i$, $i\in\N$,
  satisfies $\Lpl_H(s)\leqsl c_H\,s^{-p}$ for $s\geqsl 1$.
\item\label{condB}
  For the point process~$\Phi=\{X_i\}_{i\in\N}$,
  $\Exp^o(R(o)^d)<\infty$ and there exists a $k >
  (p\,\beta)^{-1}$ such that $\Exp^o(|X_k|^d)<\infty$, where $p$ is
  that in Condition~(A) above. 
\end{enumerate}
We then have
\begin{align}\label{eq:GH15Thm4}
 \lim_{\theta\to\infty}\theta^{1/\beta}\,
   \Prb(\SIR_o > \theta)
 &= \pi_d\,\lambda\,
    \Exp({H_1}^{1/\beta})\,
    \Exp^o\biggl[
      \biggl(
        \sum_{i=1}^\infty \frac{H_i}{|X_i|^{d\,\beta}}
      \biggr)^{-1/\beta}
    \biggr],
\end{align}
where $\pi_d = \pi^{d/2}/\Gamma(d/2+1)$ denotes the volume of a
$d$-dimensional unit ball with the Gamma function $\Gamma(x) =
\int_0^\infty t^{x-1}\,\ee^{-t}\,\dd t$.
\end{theorem}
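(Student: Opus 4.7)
The plan is to recast $\Prb(\SIR_o>\theta)$ via Palm calculus as an integral over the typical Voronoi cell $\Cell(o)$, rescale to extract the rate $\theta^{-1/\beta}$, and conclude by dominated convergence. Since the nearest BS is almost surely unique, $\ind{\SIR_o>\theta}=\sum_{X\in\Phi}\ind{o\in\Cell(X)}\,\ind{H_X\,\ell(|X|)>\theta\sum_{Y\neq X}H_Y\,\ell(|Y|)}$. Applying the refined Campbell--Mecke formula to the marked point process $\Phi_H$ (with the change of variable $u=-x$, which converts ``$x$ is the nearest BS to the origin'' into ``$u\in\Cell(o)$'' under $\Prb^o$) and then integrating out the independent $H_0$ via its tail $\Bar{F}_H(s):=\Prb(H_1>s)$ gives
\begin{equation*}
\Prb(\SIR_o>\theta) = \lambda\,\Exp^o\int_{\R^d}\ind{u\in\Cell(o)}\,\Bar{F}_H\bigl(\theta\,|u|^{d\beta}\,I(u)\bigr)\,du,\qquad I(u):=\sum_{i\geqsl 1}\frac{H_i}{|X_i-u|^{d\beta}}.
\end{equation*}

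Substituting $v=\theta^{1/(d\beta)}u$ transforms this into
\begin{equation*}
\theta^{1/\beta}\,\Prb(\SIR_o>\theta) = \lambda\,\Exp^o\int_{\R^d}\ind{\theta^{-1/(d\beta)}v\in\Cell(o)}\,\Bar{F}_H\bigl(|v|^{d\beta}\,I(\theta^{-1/(d\beta)}v)\bigr)\,dv.
\end{equation*}
Since $o$ is an interior point of $\Cell(o)$ under $\Prb^o$, the indicator converges pointwise to $1$ as $\theta\to\infty$, while $I(\theta^{-1/(d\beta)}v)\to I(0)=\sum_{i\geqsl 1}H_i|X_i|^{-d\beta}$ by continuity; granted an integrable majorant, dominated convergence then identifies the limit as $\lambda\,\Exp^o\int_{\R^d}\Bar{F}_H(|v|^{d\beta}I(0))\,dv$. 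Polar coordinates $v=r\sigma$ together with the substitution $s=r^{d\beta}I(0)$ reduce the inner integral to $\pi_d\,I(0)^{-1/\beta}\,\Exp({H_1}^{1/\beta})$ via the identity $\Exp[X^{1/\beta}]=(1/\beta)\int_0^\infty s^{1/\beta-1}\Bar{F}_H(s)\,ds$, and taking the Palm expectation yields \eqref{eq:GH15Thm4}.

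I expect the main obstacle to be the justification of dominated convergence, for which conditions~(A) and~(B) are precisely tuned. The restriction $u\in\Cell(o)$ forces $|u|\leqsl R(o)$, so $\Exp^o R(o)^d<\infty$ from (B) controls the effective region. For $u$ in a neighborhood of $o$ (say $|u|\leqsl |X_k|$), the comparison $|X_i-u|\asymp |X_i|\leqsl |X_k|$ for $i\leqsl k$ gives $I(u)\geqsl c\,|X_k|^{-d\beta}\sum_{i=1}^k H_i$ for some $c>0$, and hence
\begin{equation*}
\Exp^o[I(0)^{-1/\beta}]\leqsl c^{-1/\beta}\,\Exp^o[|X_k|^d]\,\Exp\Bigl[\Bigl(\sum_{i=1}^k H_i\Bigr)^{-1/\beta}\Bigr].
\end{equation*}
Writing $\Exp[(\sum_{i=1}^k H_i)^{-1/\beta}]=\Gamma(1/\beta)^{-1}\int_0^\infty s^{1/\beta-1}\,\Lpl_H(s)^k\,ds$ and splitting the integral at $s=1$, the bound $\Lpl_H(s)\leqsl c_H s^{-p}$ from (A) ensures convergence precisely when $pk>1/\beta$---exactly the condition imposed in (B). These same estimates both furnish the integrable dominating function for the preceding limit exchange and confirm finiteness of the right-hand side of \eqref{eq:GH15Thm4}.
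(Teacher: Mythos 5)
Your proposal follows essentially the same route as the paper: Palm inversion (equivalently, the refined Campbell formula) to write the tail probability as $\lambda$ times an integral over the typical cell $\Cell(o)$ under $\Prb^o$, the rescaling $v=\theta^{1/(d\beta)}u$, dominated convergence, and the polar-coordinate identity $\int_{\R^d}\Bar{F_H}(|z|^{d\beta})\,\dd z=\pi_d\,\Exp({H_1}^{1/\beta})$. The one place where your argument as written has a gap is the dominating function: you bound $I(u)$ from below only on the set $\{|u|\leqsl |X_k|\}$, but the integration region is $\Cell(o)$, i.e.\ $\{|u|\leqsl R(o)\}$, and $R(o)$ may well exceed $|X_k|$, so the regime $|X_k|<|u|\leqsl R(o)$ is left uncovered. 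The uniform bound you need is $|X_i-u|\leqsl |X_i|+R(o)$ for all $u\in\Cell(o)$, whence $I(u)\geqsl (|X_k|+R(o))^{-d\beta}\sum_{i=1}^k H_i$; the resulting majorant integrates (after your change of variables) to a constant times $\Exp^o\bigl[(|X_k|+R(o))^d\bigr]\,\Exp\bigl[\bigl(\sum_{i=1}^k H_i\bigr)^{-1/\beta}\bigr]$, and both factors are finite --- the first by Condition~(B) together with $(a+b)^d\leqsl 2^{d-1}(a^d+b^d)$, the second by exactly the computation you give. That computation is in fact a slightly cleaner variant of the paper's: you exploit the independence of $\{H_i\}$ from $\Phi$ to factor the Palm expectation and evaluate $\Exp\bigl[\bigl(\sum_{i=1}^k H_i\bigr)^{-1/\beta}\bigr]=\Gamma(1/\beta)^{-1}\int_0^\infty s^{1/\beta-1}\,\Lpl_H(s)^k\,\dd s$ directly, splitting at $s=1$, whereas the paper keeps the point process inside the $s$-integral, bounds the product of Laplace transforms by the $k$-th power of the worst term, and splits at $s=(|X_k|+R(o))^{d\beta}$; both routes lead to the same requirement $pk>1/\beta$ from Condition~(B). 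With the $|X_i|+R(o)$ correction your proof is complete and matches the paper's.
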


Theorem~\ref{thm:general} is a slight extention of \cite{MiyoShir16}
to higher dimensions.
Recall that $\SIR_o$ in \eqref{eq:SIR} is invariant to the
intensity~$\lambda$ of the point process~$\Phi$.
Thus, we can show that the right-hand side of \eqref{eq:GH15Thm4} does
not depend on $\lambda$ (see, e.g., the remark of Definition~4 in
\cite{GantHaen16}).

\begin{remark}
When $d=2$, the right-hand side of \eqref{eq:GH15Thm4} in
Theorem~\ref{thm:general} coincides with $\mathsf{EFIR}^\delta$ in
Theorem~4 of \cite{GantHaen16}; that is, that theorem and our
Theorem~\ref{thm:general} assert the same result.
A difference between the two theorems (besides our extention to
higher dimensions) is that we offer the set of
conditions~(A) and~(B), the role of which is discussed
in the proof and the remarks thereafter.
\end{remark}

\begin{proof}
Let $F_H$ denote the distribution function of $H_i$, $i\in\N$, and let
$\Bar{F_H}(x) = 1-F_H(x)$.
By~\eqref{eq:SIR} with $\ell(r)=r^{-d\beta}$, $r>0$, the tail
probability of the SIR for the typical user is expressed as
\begin{equation}\label{eq:BarF_H}
  \Prb(\SIR_o > \theta)
  = \Exp\Bar{F_H}\biggl(
      \theta\,|X_1|^{d\,\beta}
      \sum_{i=2}^\infty
        \frac{H_i}{|X_i|^{d\,\beta}}
    \biggr).
\end{equation}
Applying the Palm inversion formula~(see, e.g.,
\cite[Sec.~4.2]{BaccBlas09a}) to the right-hand side above, we have
\begin{align*}
  \Prb(\SIR_o > \theta)
  &= \lambda\int_{\R^d}
       \Exp^o\biggl[
         \Bar{F_H}\biggl(
           \theta\, |x|^{d\,\beta}
           \sum_{i=1}^\infty
             \frac{H_i}{|X_i-x|^{d\,\beta}}
         \biggr)\,
         \ind{\Cell(o)}(x)
       \biggr]\,
     \dd x
  \\
  &= \theta^{-1/\beta}\,\lambda
     \int_{\R^d}
       \Exp^o\biggl[
         \Bar{F_H}\biggl(
           |y|^{d\,\beta}
           \sum_{i=1}^\infty
             \frac{H_i}
                  {|X_i-\theta^{-1/(d\,\beta)}\,y|^{d\,\beta}}
         \biggr)\,
         \ind{\Cell(o)}(\theta^{-1/(d\,\beta)}\,y)
       \biggr]\,
     \dd y,
\end{align*}
where the second equality follows from the substitution of
$y=\theta^{1/(d\,\beta)}\,x$.
Therefore, if we can find a random function $A$ satisfying
\begin{gather}
  \Bar{F_H}\biggl(
    |y|^{d\,\beta}
    \sum_{i=1}^\infty
      \frac{H_i}
           {|X_i-\theta^{-1/(d\,\beta)}\,y|^{d\,\beta}}
  \biggr)\,
  \ind{\Cell(o)}(\theta^{-1/(d\,\beta)}\,y)
  \leqsl A(y),
  \quad\text{$\Prb^o$-a.s.,}
  \label{eq:A1}\\
  \int_{\R^d} \Exp^o A(y)\,\dd y < \infty,
  \label{eq:A2}
\end{gather}
the dominated convergence theorem yields
\begin{align}\label{eq:limit}
 \lim_{\theta\to\infty}
    \theta^{1/\beta}\,\Prb(\SIR_o>\theta)
 &= \lambda\int_{\R^d}
      \Exp^o\Bar{F_H}\biggl(
        |y|^{d\,\beta}
        \sum_{i=1}^\infty\frac{H_i}{|X_i|^{d\,\beta}}
      \biggr)\,
    \dd y.
\end{align}
We postpone finding such a function~$A$ and admit \eqref{eq:limit} for
a moment.
Then, substituting $z = \bigl(\sum_{i=1}^\infty
H_i/|X_i|^{d\,\beta}\bigr)^{1/(d\,\beta)}\,y$ to the integral in
\eqref{eq:limit}, we have
\begin{equation}\label{eq:integral1}
 \int_{\R^d}
   \Exp^o\Bar{F_H}\biggl(
     |y|^{d\,\beta}
     \sum_{i=1}^\infty \frac{H_i}{|X_i|^{d\,\beta}}
   \biggr)\,
 \dd y
 = \Exp^o\biggl[
     \biggl(
       \sum_{i=1}^\infty \frac{H_i}{|X_i|^{d\,\beta}}
     \biggr)^{-1/\beta}
   \biggr]
   \int_{\R^d}\Bar{F_H}(|z|^{d\,\beta})\,\dd z,
\end{equation}
and the integral on the right-hand side above further reduces to
\begin{align}\label{eq:integral2}
  \int_{\R^d}\Bar{F_H}(|z|^{d\,\beta})\,\dd z
  = d\,\pi_d\int_0^\infty\Bar{F_H}(r^{d\,\beta})\,r^{d-1}\,\dd r
  = \frac{\pi_d}{\beta}\,
    \int_0^\infty \Bar{F_H}(s)\,s^{-1+1/\beta}\,\dd s
  = \pi_d\,\Exp({H_1}^{1/\beta}).
\end{align}
Hence, applying \eqref{eq:integral1} and \eqref{eq:integral2} to
\eqref{eq:limit}, we obtain \eqref{eq:GH15Thm4}.

It remains to find a function~$A$ satisfying \eqref{eq:A1} and
\eqref{eq:A2}.
Since $\Bar{F_H}$ is nonincreasing and $|X_i-y|\leqsl |X_i|+R(o)$
$\Prb^o$-a.s.\ for $y\in\Cell(o)$, we can set a function~$A$ satisfying
\eqref{eq:A1} as
\[
  A(y) =
  \Bar{F_H}\biggl(
    |y|^{d\,\beta}\,
    \sum_{i=1}^\infty
      \frac{H_i}{(|X_i| + R(o))^{d\,\beta}}
  \biggr).
\]
We now confirm that this function~$A$ satisfies~\eqref{eq:A2}.
Substituting $z = \bigl( \sum_{i=1}^\infty H_i $ $/ (|X_i| +
R(o))^{d\,\beta} \bigr)^{1/(d\,\beta)}\,y$ and using
\eqref{eq:integral2} again, we have
\[
 \int_{\R^d}\Exp^o A(y)\,\dd y
 = \pi_d\,\Exp({H_1}^{1/\beta})\,
   \Exp^o\biggl[
     \biggl(
       \sum_{i=1}^\infty
         \frac{H_i}{(|X_i| + R(o))^{d\,\beta}}
     \biggr)^{-1/\beta}
   \biggr],
\]
where $\Exp({H_1}^{1/\beta})<\infty$ from Condition~(A).
Applying the identity $x^{-1/\beta} = \Gamma(1/\beta)^{-1}
\int_0^\infty \ee^{-x\,s}\, s^{-1+1/\beta}\,\dd s$ to the second
expectation on the right-hand side above, we have
\begin{align*}
 \Exp^o\biggl[
   \biggl(
     \sum_{i=1}^\infty
       \frac{H_i}{(|X_i| + R(o))^{d\,\beta}}
   \biggr)^{-1/\beta}
 \biggr]
 = \frac{1}{\Gamma(1/\beta)}
   \int_0^\infty
     s^{-1+1/\beta}\,
     \Exp^o\biggl[
       \prod_{i=1}^\infty
         \Lpl_H\biggl(
           \frac{s}{(|X_i| + R(o))^{d\,\beta}}
         \biggr)
     \biggr]\,
   \dd s.
\end{align*} 
Recall that $X_i$, $i\in\N$, are ordered such that
$|X_1|<|X_2|<\cdots$.
By truncating the infinite product above by a finite~$k\in\N$ such
that $p\,\beta\,k>1$ and applying $\Lpl_H(s)\leqsl c_H\,s^{-p}$ for
$s\geqsl 1$ from Condition~(A), we can bound the integral on the
right-hand side above by
\begin{align*}
  &\int_0^\infty
     s^{-1+1/\beta}\,
     \Exp^o\biggl[
       \prod_{i=1}^k
         \Lpl_H\biggl(
           \frac{s}{(|X_i| + R(o))^{d\,\beta}}
         \biggr)
     \biggr]\,
   \dd s
  \\
  &\leqsl \int_0^\infty
         s^{-1+1/\beta}\,
         \Exp^o\biggl[
           \biggl\{\Lpl_H\biggl(
             \frac{s}{(|X_k| + R(o))^{d\,\beta}}
           \biggr)\biggr\}^k
         \biggr]\,
       \dd s
  \\
  &\leqsl \Exp^o\biggl[
         \int_0^{(|X_k| + R(o))^{d\,\beta}}
           s^{-1+1/\beta}\,
         \dd s
       \biggr]
       + {c_H}^k\,
         \Exp^o\biggl[
           (|X_k| + R(o))^{d\,p\,\beta\,k}
           \int_{(|X_k| + R(o))^{d\,\beta}}^\infty
             s^{-1 + 1/\beta - p\,k}\,
           \dd s
         \biggr]
  \\
  &= \beta\,
     \biggl(
       1 + \frac{{c_H}^k}{p\,\beta\,k-1}
     \biggr)\,
     \Exp^o\bigl[
       (|X_k| + R(o))^d
     \bigr].
\end{align*}
Hence, inequality $(a+b)^d \leqsl 2^{d-1}\,(a^d + b^d)$ ensures
\eqref{eq:A2} under Condition~(B) of the theorem.
\end{proof}

\begin{remark}
The differences between the proof in \cite{GantHaen16} and ours are as
follows.
The first and less essential one is that, in~\cite{GantHaen16}, they
arrange the right-hand side of \eqref{eq:BarF_H} into a certain
appropriate form and then apply the Campbell-Mecke formula~(see, e.g.,
\cite[Sec.~1.4]{BaccBlas09a}).
On the other hand, we apply the Palm inversion formula directly.
Second, \cite{GantHaen16} does not specify any condition under which
the result holds.
However, equality~\eqref{eq:limit} requires some kind of uniform
integrability condition to change the order of the limit and
integrals.
Our set of conditions~(A) and~(B) gives a sufficient condition for
this order change to be valid and complements the proof of
\cite{GantHaen16}.
\end{remark}

\begin{remark}
Condition~(A) claims that the Laplace transform of $H_i$, $i\in\N$,
decays faster than or equal to power laws.
Though this condition excludes distributions with a mass at the
origin, it covers many practical distributions.
For example, Gamma distribution~$\Gam(p, q)$, $p>0$, $q>0$, has the
Laplace transform~$\Lpl_H(s)=(1+q\,s)^{-p}$ and we can take $c_H\geqsl
q^{-p}$.
In addition, we can see from the results of \cite{AsmuJensRoja16} that
lognormal distributions also satisfy Condition~(A).
\end{remark}

The asymptotic constant in \eqref{eq:GH15Thm4} of
Theorem~\ref{thm:general} depends on the point process~$\Phi$ and the
distribution~$F_H$ of the propagation effects.
The following proposition indicates an impact of the propagation
effect distribution on the asymptotic constant by comparing with the
case without propagation effects.

\begin{proposition}\label{thm:inequality}
Let $C(\beta, F_H)$ denote the limit on the right-hand side of
\eqref{eq:GH15Thm4}, specifying the dependence on $\beta$ and the
distribution~$F_H$ of propagation effects.
When $\Exp H_1 <\infty$, we have
\begin{equation}\label{eq:AsymIneq}
  C(\beta, F_H)
  \geqsl \frac{\Exp({H_1}^{1/\beta})}{(\Exp H_1)^{1/\beta}}\,
      C(\beta, \delta_1),
\end{equation}
where $\delta_1$ denotes the Dirac measure with the mass at $1$.
\end{proposition}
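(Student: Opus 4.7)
The plan is to recognize that the desired inequality reduces, after canceling the common factor $\pi_d\,\lambda\,\Exp({H_1}^{1/\beta})$ from the expression of $C(\beta,F_H)$ in \eqref{eq:GH15Thm4}, to the statement
\[
  \Exp^o\biggl[
    \biggl(\sum_{i=1}^\infty\frac{H_i}{|X_i|^{d\,\beta}}\biggr)^{-1/\beta}
  \biggr]
  \geqsl
  (\Exp H_1)^{-1/\beta}\,
  \Exp^o\biggl[
    \biggl(\sum_{i=1}^\infty\frac{1}{|X_i|^{d\,\beta}}\biggr)^{-1/\beta}
  \biggr],
\]
since $C(\beta,\delta_1)$ is exactly $\pi_d\,\lambda$ times the right-hand Palm expectation.

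To establish this, I would apply Jensen's inequality conditionally on the point configuration $\Phi$. The function $\varphi(x)=x^{-1/\beta}$ is strictly convex on $(0,\infty)$ since $\varphi''(x)=\beta^{-1}(\beta^{-1}+1)\,x^{-1/\beta-2}>0$. Under $\Prb^o$, the marks $H_i$, $i\in\N$, remain i.i.d.\ with the same distribution as $H_1$ and independent of $\Phi$, because the marks are independent of the ground process. Conditioning on $\Phi$ and exchanging sum with expectation (all terms being nonnegative) therefore yields
\[
  \Exp^o\biggl[
    \biggl(\sum_{i=1}^\infty\frac{H_i}{|X_i|^{d\,\beta}}\biggr)^{-1/\beta}
    \bigg|\,\Phi
  \biggr]
  \geqsl
  \biggl(\Exp^o\biggl[
    \sum_{i=1}^\infty\frac{H_i}{|X_i|^{d\,\beta}}
    \bigg|\,\Phi
  \biggr]\biggr)^{-1/\beta}
  =
  (\Exp H_1)^{-1/\beta}
  \biggl(\sum_{i=1}^\infty\frac{1}{|X_i|^{d\,\beta}}\biggr)^{-1/\beta}.
\]
Taking $\Exp^o$ on both sides delivers the displayed reduced inequality, and multiplying through by $\pi_d\,\lambda\,\Exp({H_1}^{1/\beta})$ gives \eqref{eq:AsymIneq}.

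There is no real obstacle here: the only points to be careful about are that the conditional expectation of the series inside $\varphi$ is finite $\Prb^o$-a.s.\ (so that Jensen applies without degeneracy), which is guaranteed by $\Exp H_1<\infty$ together with the convergence of $\sum_i|X_i|^{-d\,\beta}$ implicit in the fact that $C(\beta,\delta_1)$ is well-defined, and that the Palm distribution preserves the independence and identical distribution of the marks $H_i$. Equality in \eqref{eq:AsymIneq} holds essentially only when $H_1$ is deterministic, in which case both sides collapse to $C(\beta,\delta_1)$.
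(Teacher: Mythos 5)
Your proof is correct and follows essentially the same route as the paper: the paper's proof is exactly Jensen's inequality for the convex function $x^{-1/\beta}$ applied conditionally on $\Phi$, which pulls $\Exp H_1$ out of the sum and yields \eqref{eq:AsymIneq} after cancelling the common factor. Your additional remarks on the marks remaining i.i.d.\ and independent of $\Phi$ under $\Prb^o$ and on the non-degeneracy needed for Jensen are sound but not spelled out in the paper.
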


\begin{proof}
The result immediately follows from Jensen's inequality conditioned on
$\Phi=\{X_i\}_{i\in\N}$.
On the right-hand side of \eqref{eq:GH15Thm4}, since
$f(x)=x^{-1/\beta}$ is convex for $x>0$,
\begin{align*}
  \Exp^o\biggl[
    \biggl(
      \sum_{i\in\N}\frac{H_i}{|X_i|^{d\,\beta}}
    \biggr)^{-1/\beta}
  \biggr]
  \geqsl \Exp^o\biggl[
        \biggl(
          \sum_{i\in\N}\frac{\Exp H_i}{|X_i|^{d\,\beta}}
        \biggr)^{-1/\beta}
      \biggr]
  = \frac{1}{(\Exp H_1)^{1/\beta}}\,
    \Exp^o\biggl[
      \biggl(
        \sum_{i\in\N} \frac{1}{|X_i|^{d\,\beta}}
      \biggr)^{-1/\beta}
    \biggr],
\end{align*}
and \eqref{eq:AsymIneq} holds.
\end{proof}

\begin{remark}
When $F_H=\mathrm{Exp}(1)$, denoting the exponential distribution with
unit mean (which assumes Rayleigh fading and ignores shadowing), the
result of Proposition~\ref{thm:inequality} reduces to the second part
of Theorem~2 in~\cite{MiyoShir15} since $\Exp({H_1}^{1/\beta}) =
\Gamma(1+1/\beta)$ in this case (though only the Ginibre point
process~$\Phi$ is considered there).
In inequality~\eqref{eq:AsymIneq}, it is easy to see (by Jensen's
inequality for concave function $f(x)=x^{1/\beta}$) that the
coefficient~$\Exp({H_1}^{1/\beta})/(\Exp H_1)^{1/\beta}$ is not
greater than $1$.
Now, suppose that $\Exp H_1 = 1$.
Then, the dominated convergence theorem (due to ${H_1}^{1/\beta}\leqsl 1
+ H_1$ a.s.) leads to $\Exp({H_1}^{1/\beta})\to1$ as both
$\beta\downarrow1$ and $\beta\uparrow\infty$, which implies that
$C(\beta, F_H)$ tends to be larger than or equal to $C(\beta, \delta_1)$
when $\beta$ is close to $1$ or sufficiently large.
\end{remark}

\section{Examples for unbounded path-loss models}\label{sec:examples}

In this section, we restrict ourselves to the case of $d=2$ and provide
a few examples demonstrating Theorem~\ref{thm:general} in the
preceding section.
We also give a counterexample violating Condition~(B) of the theorem.

\subsection{Poisson-based model}

We here consider the BS configuration~$\Phi$ as a homogeneous Poisson
point process on $\R^2$ with positive and finite intensity.
We first confirm that $\Phi$ satisfies Condition~(B) of
Theorem~\ref{thm:general}.

\begin{lemma}\label{lem:Poi_A}
Let $\Phi=\{X_i\}_{i\in\N}$ denote a homogeneous Poisson point process
on $\R^2$ with positive and finite intensity.
Then, for $\epsilon>0$,
\begin{align}
  \Exp^o\ee^{\epsilon|X_k|} &<\infty,
  \quad k\in\N,
  \label{eq:Poi_distance}\\
  \Exp^o\ee^{\epsilon R(o)} &<\infty.
  \label{eq:Poi_radius}
\end{align}
\end{lemma}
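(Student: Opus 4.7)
The plan rests on Slivnyak's theorem: because $\Phi$ is a homogeneous Poisson process, the reduced Palm distribution of $\Phi\setminus\{o\}$ under~$\Prb^o$ coincides with the distribution of $\Phi$ under~$\Prb$. Every void probability computed under~$\Prb^o$ therefore reduces to the usual $\exp(-\lambda\cdot\text{area})$, and this is essentially the only distributional input I will need.

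For~\eqref{eq:Poi_distance}, the distance $|X_k|$ under~$\Prb^o$ is simply the distance from the origin to the $k$-th nearest point of a homogeneous Poisson process of intensity~$\lambda$. Writing $\Phi(B(o,r))$ for the number of points in the closed ball of radius~$r$, Slivnyak's theorem gives
\begin{equation*}
  \Prb^o(|X_k|>r)
  = \Prb\bigl(\Phi(B(o,r))<k\bigr)
  = \sum_{j=0}^{k-1}\frac{(\lambda\,\pi\,r^2)^j}{j!}\,\ee^{-\lambda\,\pi\,r^2},
\end{equation*}
so the tail of $|X_k|$ is a polynomial in~$r$ times $\ee^{-\lambda\,\pi\,r^2}$. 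Integrating $\epsilon\,\ee^{\epsilon r}\,\Prb^o(|X_k|>r)$ in~$r$ is then a routine Gaussian-type estimate and yields~\eqref{eq:Poi_distance}.

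For~\eqref{eq:Poi_radius}, the main geometric input is that $\Cell(o)$ is star-shaped with respect to~$o$: for $y\in\Cell(o)$ and $t\in[0,1]$, a direct expansion of $|ty-X_i|^2-|ty|^2$ yields $ty\in\Cell(o)$ for every $X_i\in\Phi\setminus\{o\}$. Consequently, if $R(o)>r$ there exists some $x$ on the circle of radius~$r$ with $x\in\Cell(o)$, and the definition of the Voronoi cell then forces $B(x,r)\cap(\Phi\setminus\{o\})=\emptyset$. I would then cover the circle by a fixed, $r$-independent number~$N$ of reference points $y_1,\ldots,y_N$ placed at equal angles, with $N$ chosen large enough that every point on the circle lies within Euclidean distance~$r/2$ of some~$y_i$; this guarantees $B(y_i,r/2)\subseteq B(x,r)$. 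A union bound combined with Slivnyak then gives
\begin{equation*}
  \Prb^o(R(o)>r)
  \leqsl N\,\ee^{-\lambda\,\pi\,r^2/4},
\end{equation*}
and integrating against $\epsilon\,\ee^{\epsilon r}$ in~$r$ finishes~\eqref{eq:Poi_radius}.

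The step requiring the most care is the covering argument for $R(o)$: the prefactor~$N$ must be independent of~$r$ so that the $\ee^{-cr^2}$ decay is preserved and the exponential weight $\ee^{\epsilon r}$ absorbed for every $\epsilon>0$. Everything else---Slivnyak's theorem, the explicit void probability, the star-shape identity, and the one-dimensional integrations in~$r$---is essentially mechanical.
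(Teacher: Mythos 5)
Your proposal is correct. For~\eqref{eq:Poi_distance} you do exactly what the paper does: Slivnyak reduces $\Prb^o(|X_k|>r)$ to the Poisson counting probability $\ee^{-\lambda\pi r^2}\sum_{j=0}^{k-1}(\lambda\pi r^2)^j/j!$, and the Gaussian-type tail absorbs $\ee^{\epsilon r}$ for every $\epsilon$. For~\eqref{eq:Poi_radius} you diverge from the paper in a genuine way: the paper simply invokes Calka's Theorem~3, which gives the sharper bound $\Prb^o(R(o)>r)\leqsl 4\pi\lambda r^2\ee^{-\pi\lambda r^2}$ for $r$ large, whereas you give a self-contained covering argument. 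Your chain of reasoning is sound: $\Cell(o)$ is star-shaped about $o$ (indeed convex), so $R(o)>r$ produces a point $x$ of $\Cell(o)$ on the circle of radius $r$; membership in $\Cell(o)$ forces the open ball $B(x,r)$ to be void of $\Phi\setminus\{o\}$; a fixed number $N$ of equally spaced reference points on the circle (with $N$ independent of $r$ because the covering radius $r/2$ scales with the chord lengths) plus a union bound and Slivnyak give $\Prb^o(R(o)>r)\leqsl N\ee^{-\lambda\pi r^2/4}$. The constant in the exponent is worse than Calka's, but any bound of the form $c_1\ee^{-c_2 r^2}$ suffices here. Your argument is in fact close in spirit to the Foss--Zuyev petal construction that the paper deploys later for the determinantal case (inequality~\eqref{eq:FoZu96}), with balls in place of petals; what it buys is a proof that is elementary and self-contained rather than resting on an external citation, at the cost of a looser constant that is immaterial for the lemma.
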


This lemma ensures that $|X_k|$, $k\in\N$, and $R(o)$ have any order
of moments.

\begin{proof}
Let $\lambda$ denote the intensity of $\Phi$ and let $D_r$ denote the
disk centered at the origin with radius~$r>0$.
Recalling that $X_i$, $i\in\N$, are ordered such that
$|X_1|<|X_2|<\cdots$, we have
\begin{align*}
  \Prb^o(|X_k| > r)
  &= \Prb(|X_k| > r)
   = \Prb(\Phi(D_r) < k)
   = \ee^{-\lambda\pi r^2}
     \sum_{j=0}^{k-1} \frac{(\lambda\,\pi\,r^2)^j}{j!}.
\end{align*}
Thus, we can use the density function of $|X_k|$ and show
\eqref{eq:Poi_distance}.

On the other hand, for the circumscribed radius~$R(o)$ of the typical
Voronoi cell of $\Phi$, Calka~\cite[Theorem~3]{Calk02} shows that
there exists an $r_0\in(0,\infty)$ such that
\[
  \Prb^o(R(o) > r)
  \leqsl 4\,\pi\,\lambda\,r^2\,\ee^{-\pi\lambda r^2}
  \quad\text{for $r\geqsl r_0$,}
\]
and we can show \eqref{eq:Poi_radius} by applying $\Exp^o\ee^{\epsilon
  R(o)} = 1 + \epsilon \int_0^\infty \ee^{\epsilon r}\,\Prb^o(R(o) >
r)\, \dd r$.
\end{proof}

Now, we apply Theorem~\ref{thm:general} and obtain the following.

\begin{corollary}\label{cor:Poisson}
Suppose that $\Phi=\{X_i\}_{i\in\N}$ is a homogeneous Poisson point
process on $\R^2$.
When the propagation effects~$H_i$, $i\in\N$, satisfy
Condition~(A) of Theorem~\ref{thm:general}, the right-hand
side of \eqref{eq:GH15Thm4} reduces to $(\beta/\pi)\,\sin(\pi/\beta)$.
\end{corollary}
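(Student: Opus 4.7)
The plan is to evaluate the right-hand side of \eqref{eq:GH15Thm4} by direct computation, exploiting the special structure of the homogeneous Poisson point process. First I would invoke Slivnyak's theorem: under the Palm distribution $\Prb^o$, the points of $\Phi$ other than the origin are distributed as the original stationary PPP of intensity $\lambda$. This lets me replace $\Exp^o$ in \eqref{eq:GH15Thm4} by the ordinary $\Exp$ over such a PPP on $\R^2$. Second, since $\beta>1$, I would reuse the identity $x^{-1/\beta} = \Gamma(1/\beta)^{-1}\int_0^\infty \ee^{-xs} s^{-1+1/\beta}\,\dd s$ already invoked in the proof of Theorem~\ref{thm:general}, together with the independence of $\{H_i\}$ and $\Phi$, to rewrite the negative moment as an integral of the Laplace functional of the PPP:
\[
  \Exp^o\biggl[
    \biggl(\sum_{i=1}^\infty \frac{H_i}{|X_i|^{2\beta}}\biggr)^{-1/\beta}
  \biggr]
  = \frac{1}{\Gamma(1/\beta)}
    \int_0^\infty s^{-1+1/\beta}\,
      \exp\!\biggl(-\lambda\int_{\R^2}\!\bigl[1-\Lpl_H(s/|x|^{2\beta})\bigr]\dd x\biggr)\dd s.
\]

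Next, I would compute the inner integral by passing to polar coordinates and substituting $u=s/r^{2\beta}$, which transforms $\int_{\R^2}[1-\Lpl_H(s/|x|^{2\beta})]\,\dd x$ into $(\pi/\beta)\,s^{1/\beta}\int_0^\infty[1-\Lpl_H(u)]\,u^{-1-1/\beta}\,\dd u$. Writing $1-\Lpl_H(u)=\Exp[1-\ee^{-u H_1}]$, applying Fubini, and using the standard evaluation $\int_0^\infty (1-\ee^{-uh})\,u^{-1-1/\beta}\,\dd u = \beta\,\Gamma(1-1/\beta)\,h^{1/\beta}$, this integral reduces to $\pi\,\Gamma(1-1/\beta)\,\Exp({H_1}^{1/\beta})\,s^{1/\beta}$. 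Both steps are justified by Condition~(A), which guarantees $\Exp({H_1}^{1/\beta})<\infty$ and controls $\Lpl_H$ at infinity.

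With $a=\pi\,\lambda\,\Gamma(1-1/\beta)\,\Exp({H_1}^{1/\beta})$, the outer $s$-integral becomes $\int_0^\infty s^{-1+1/\beta}\,\ee^{-a s^{1/\beta}}\,\dd s$, which the substitution $t=a\,s^{1/\beta}$ collapses to $\beta/a$. Multiplying by the prefactor $\pi_2\,\lambda\,\Exp({H_1}^{1/\beta})/\Gamma(1/\beta)$ with $\pi_2=\pi$, the factors $\lambda$ and $\Exp({H_1}^{1/\beta})$ cancel, and the Euler reflection formula $\Gamma(1/\beta)\,\Gamma(1-1/\beta)=\pi/\sin(\pi/\beta)$ gives the asserted value $(\beta/\pi)\sin(\pi/\beta)$. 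No genuine obstacle arises; the only real work is tracking the Gamma-function constants through two changes of variables, and the intensity-invariance noted after Theorem~\ref{thm:general} provides a useful sanity check that $\lambda$ must drop out of the final answer.
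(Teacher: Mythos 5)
Your computation is correct, and every step checks out: Slivnyak's theorem legitimizes replacing $\Exp^o$ by the ordinary expectation over a PPP, the marked-PPP probability generating functional gives the exponential of $-\lambda\int_{\R^2}[1-\Lpl_H(s/|x|^{2\beta})]\,\dd x$, the polar/substitution steps yield $\pi\,\Gamma(1-1/\beta)\,\Exp({H_1}^{1/\beta})\,s^{1/\beta}$, and the final Gamma-integral plus Euler reflection produce $(\beta/\pi)\sin(\pi/\beta)$ with $\lambda$ and $\Exp({H_1}^{1/\beta})$ cancelling as the intensity-invariance remark predicts. The only difference from the paper is that the paper does not carry out this computation at all: its proof of the corollary is a one-line deferral to the proof of Lemma~6 in the cited work of Ganti and Haenggi, after noting that Conditions~(A) and~(B) are verified (the paper's Lemma~\ref{lem:Poi_A} supplies Condition~(B), which you implicitly rely on by invoking Theorem~\ref{thm:general} in the first place). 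Your version is therefore the self-contained derivation that the paper outsources; it makes explicit where Condition~(A) enters (finiteness and positivity of $\Exp({H_1}^{1/\beta})$, justification of Fubini/Tonelli in the Laplace-identity step), at the cost of a page of routine but careful bookkeeping with the two changes of variables.
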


\begin{proof}
Since the conditions of Theorem~\ref{thm:general} are fulfilled, the
result follows from the proof of Lemma~6 in \cite{GantHaen16}.
\end{proof}

\begin{remark}
The asymptotic result in Corollary~\ref{cor:Poisson} agrees with that
in Remark~4 of \cite{MiyoShir14a}, where only Rayleigh fading is
considered.
Corollary~\ref{cor:Poisson} states that the SIR tail probability in
the homogeneous Poisson-based model is asymptotically insensitive to
the distribution of propagation effects as far as it satisfies
Condition~(A) of Theorem~\ref{thm:general}.
\end{remark}

\subsection{Determinantal-based model}

In this subsection, we consider $\Phi$ as a general stationary and
isotropic determinantal point process on $\C\simeq\R^2$ with
intensity~$\lambda$.
Let $K$:~$\C^2\to\C$ denote the kernel of $\Phi$ with
respect to the Lebesgue measure.
The product density functions~(joint intensities)~$\rho_n$, $n\in\N$,
with respect to the Lebesgue measure are given by
\[
  \rho_n(z_1,z_2,\ldots,z_n)
  = \det\bigl(K(z_i, z_j)\bigr)_{i, j=1,2,\ldots,n}
  \quad\text{for $z_1$, $z_2$, \ldots, $z_n\in\C$,}
\]
where $\det$ denotes the determinant.  
In order for the point process~$\Phi$ to be well-defined, we assume
that (i)~the kernel~$K$ is continuous on $\C\times\C$, (ii)~$K$ is
Hermitian in the sense that $K(z,w) = \Bar{K(w,z)}$ for $z$, $w\in\C$,
where $\Bar{z}$ denotes the complex conjugate of $z\in\C$, and
(iii)~the integral operator on $L^2(\C)$ corresponding to $K$ is of
locally trace class with the spectrum in $[0,1]$; that is, for a
compact set $C\in\B(\C)$, the restriction $K_C$ of $K$ on $C$ has the
eigenvalues $\kappa_{C,i}$, $i\in\N$, satisfying
$\sum_{i\in\N}\kappa_{C,i}<\infty$ and $\kappa_{C,i}\in[0,1]$ for each
$i\in\N$ (see, e.g., \cite[Chap.~4]{HougKrisPereVira09}).
Furthermore, for stationarity and isotropy, the kernel~$K$ is assumed
to satisfy $K(z,w) = K(0,z-w)$ which depends only on the
distance~$|z-w|$ of $z$ and $w\in\C$.
The product density functions~$\rho_n$, $n\in\N$, are then
motion-invariant (invariant to translations and rotations), and we
have $\rho_1(z)=K(z,z)=\lambda$ and that $\rho_2(0,z) = \lambda^2 -
|K(0,z)|^2$ depends only on $|z|$  for $z\in\C$.
An $\alpha$-Ginibre point process with $\alpha\in(0,1]$ is one of the
main examples of stationary and isotropic determinantal point
processes on $\C$ and its kernel is given by
\begin{equation}\label{eq:alpha_kernel}
  K_\alpha(z, w) =
  \frac{1}{\pi}\,\ee^{-(|z|^2+|w|^2)/(2\alpha)}\,\ee^{z\,\Bar{w}/\alpha},
  \quad z, w\in\C,\; \alpha\in(0,1],
\end{equation}
with respect to the Lebesgue measure (see, e.g.,
\cite{Gold10,MiyoShir16b}).
We can see that the intensity and the second product density of the
$\alpha$-Ginibre point process are $\lambda = \rho_1^{(\alpha)}(0) =
\pi^{-1}$ and $\rho_2^{(\alpha)}(0,z) =
(1-\ee^{-|z|^2/\alpha})/\pi^2$, respectively.

First, concerning Condition~(B) of
Theorem~\ref{thm:general}, we show the following.

\begin{lemma}\label{lem:determinantal}
Let $\Phi$ denote a stationary and isotropic determinantal point
process on $\C$ with positive and finite intensity as described above.
\begin{enumerate}[(i)]
\item Let $X_i$, $i\in\N$, denote the points of $\Phi$ such that
  $|X_1|<|X_2|<\cdots$.
  Then, there exist $a_1>0$ and $a_2>0$ such that, for any $k\in\N$,
  we can take an $r_k>0$ satisfying
  \begin{equation}\label{eq:bound1}
    \Prb^o(|X_k| > r) \leqsl a_1\,\ee^{-a_2\,r^2}
    \quad\text{for $r \geqsl r_k$.}
  \end{equation}
\item Let $R(o)$ denote the circumscribed radius of the typical Voronoi
  cell~$\Cell(o)$ of $\Phi$.
  Then, there exist $b_1>0$ and $b_2>0$ such that
  \begin{equation}\label{eq:bound2}
    \Prb^o(R(o) > r) \leqsl b_1\,\ee^{-b_2\,r^2}
    \quad\text{for $r>0$.}
  \end{equation}
\end{enumerate}
\end{lemma}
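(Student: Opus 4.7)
My plan is to exploit the classical fact that under the Palm distribution~$\Prb^o$ of the DPP~$\Phi$, the point process $\Phi - \delta_o$ is itself determinantal on~$\C$, with reduced Palm kernel
\[
  K^o(z, w) = K(z, w) - \frac{K(z, 0)\,K(0, w)}{K(0, 0)}.
\]
Both estimates then reduce to tail bounds on counts of a DPP: for any bounded Borel $A\subset\C$, $(\Phi-\delta_o)(A)$ is equidistributed under~$\Prb^o$ with $\sum_i \xi_i$, where $\xi_i$ are independent Bernoulli variables whose parameters are the eigenvalues of $K^o$ restricted to $L^2(A)$. The key quantitative input will be the trace identity
\[
  \mathrm{tr}\,K^o_A = \lambda\,|A| - \lambda^{-1}\int_A |K(z, 0)|^2\,\dd z \geqsl \lambda\,|A| - C,
\]
where $C := \lambda^{-1}\int_\C |K(z, 0)|^2\,\dd z < \infty$ under the standing regularity on~$K$ (in the $\alpha$-Ginibre case~\eqref{eq:alpha_kernel} one has $C = \alpha/\pi$).

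For part~(i), I observe that under $\Prb^o$ the event $\{|X_k|>r\}$ coincides with $\{(\Phi-\delta_o)(D_r)\leqsl k-1\}$, with $D_r$ the disk of radius~$r$ around~$o$. Markov's inequality applied to the probability generating function at $z = 1/2$, together with the product-over-eigenvalues formula for DPPs, gives
\[
  \Prb^o(|X_k| > r) \leqsl 2^{k-1}\,\Exp^o\bigl[2^{-(\Phi-\delta_o)(D_r)}\bigr] \leqsl 2^{k-1}\,\ee^{-(\mathrm{tr}\,K^o_{D_r})/2} \leqsl 2^{k-1}\,\ee^{C/2}\,\ee^{-\lambda\pi r^2/2}.
\]
Choosing $a_1 := \ee^{C/2}$ and $a_2 := \lambda\pi/4$, the prefactor $2^{k-1}$ is absorbed for $r \geqsl r_k := (4(k-1)\log 2/(\lambda\pi))^{1/2}$, which delivers~\eqref{eq:bound1}.

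For part~(ii), I would first establish a deterministic geometric lemma: partition~$\C$ into $n=8$ closed angular sectors $S_1,\ldots,S_8$ of opening~$\pi/4$ meeting at~$o$; if each $S_j \cap D_s$ contains at least one point of~$\Phi$, then in any direction~$\theta$ the perpendicular bisector between~$o$ and the point in the enclosing sector cuts the ray from~$o$ at distance at most $(s/2)/\cos(\pi/4) = s/\sqrt{2}$, so $R(o) \leqsl s/\sqrt{2}$. Contrapositively, with $s = r\sqrt{2}$, the event $\{R(o) > r\}$ is contained in $\bigcup_{j=1}^{8}\{\Phi(A_j) = 0\}$ for $A_j := S_j \cap D_{r\sqrt 2}\setminus\{o\}$ of common area $\pi r^2/4$. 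By isotropy of~$\Phi$ and the DPP void-probability bound,
\[
  \Prb^o(R(o) > r) \leqsl 8\,\Prb^o\bigl((\Phi-\delta_o)(A_1) = 0\bigr) \leqsl 8\,\ee^{-\mathrm{tr}\,K^o_{A_1}} \leqsl 8\,\ee^{C}\,\ee^{-\lambda\pi r^2/4},
\]
which yields~\eqref{eq:bound2} after enlarging $b_1$ to cover small~$r$ trivially.

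The main delicacy I anticipate is confirming the uniform trace lower bound, which rests on the $L^2$-decay of $K(\cdot, 0)$ at the stated level of generality. For the $\alpha$-Ginibre kernel this decay is Gaussian and $C$ is explicit, so the argument is completely effective in the principal case of interest; more generally it passes through under the mild regularity $K(\cdot, 0) \in L^2(\C)$.
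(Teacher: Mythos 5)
Your proposal is correct and follows the same overall strategy as the paper: pass to the reduced Palm kernel $K^{!}(z,w)=K(z,w)-K(z,0)K(0,w)/K(0,0)$, use the Bernoulli decomposition of DPP counts, lower-bound the Palm mean measure by $\lambda\mu(A)-C$, and conclude by concentration for (i) and a union bound over void events for (ii). Two of your choices differ in detail from the paper's. First, instead of invoking the Chernoff--Hoeffding inequality abstractly, you run the exponential Markov bound explicitly at $z=1/2$ (for (i)) and use the exact void-probability product $\prod_i(1-\kappa_i)\leqsl\ee^{-\operatorname{tr}}$ (for (ii)); this is more elementary and makes all constants effective, at no cost. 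Second, for (ii) you replace Foss \& Zuyev's seven petals by eight angular sectors of opening $\pi/4$ intersected with $D_{r\sqrt2}$; your geometric lemma (bisector of $o$ and a point $Y$ in the same sector cuts the ray at distance $|Y|/(2\cos\angle)\leqsl s/\sqrt2$) is sound, and the only price is a slightly smaller exponent ($\pi r^2/4\approx0.785\,r^2$ per sector versus $\approx1.09\,r^2$ per petal), which is irrelevant for the qualitative statement. The one step you leave hanging --- the uniform bound $C=\lambda^{-1}\int_{\C}|K(z,0)|^2\,\dd z<\infty$, which you present as an extra regularity hypothesis and flag as the ``main delicacy'' --- is in fact automatic under the standing assumptions and is precisely the paper's Lemma~\ref{lem:kernel}: by Mercer's expansion of $K_C$ on a compact $C\ni0$ and orthonormality of the eigenfunctions, $\int_C|K(0,z)|^2\,\dd z=\sum_i\kappa_{C,i}^2|\varphi_{C,i}(0)|^2\leqsl\sum_i\kappa_{C,i}|\varphi_{C,i}(0)|^2=K(0,0)=\lambda$ since all $\kappa_{C,i}\in(0,1]$, whence $C\leqsl1$ after letting $C\uparrow\C$. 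Supplying that short argument closes the only gap and makes your proof complete at the full stated generality, not just for the $\alpha$-Ginibre case.
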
 

By Lemma~\ref{lem:determinantal}, it is easy to confirm, similar to
Lemma~\ref{lem:Poi_A}, that $|X_k|$, $k\in\N$, and $R(o)$ have any
order of moments under $\Prb^o$.
To prove Lemma~\ref{lem:determinantal}, we use the following
supplementary lemma.

\begin{lemma}\label{lem:kernel}
The kernel $K$ of a determinantal point process~$\Phi$ satisfies
\begin{equation}\label{eq:boundM}
  \int_{\C} |K(0,z)|^2\,\dd z
  \leqsl K(0,0).
\end{equation}
\end{lemma}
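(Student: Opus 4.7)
The plan is to exploit the assumption that the integral operator associated with $K$ is locally trace class with spectrum in $[0,1]$. The key algebraic observation is that, as an operator inequality, $\mathcal{K}^2 \leqsl \mathcal{K}$ on every compact set, because any eigenvalue $\kappa \in [0,1]$ satisfies $\kappa^2 \leqsl \kappa$. Translating this inequality to the diagonal of the kernel will yield \eqref{eq:boundM}, and we then pass from compact sets to all of $\C$ by monotone convergence.

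First, I would fix a compact set $C \in \B(\C)$ containing the origin (for concreteness, take $C = D_r = \{z \in \C : |z|\leqsl r\}$) and use the spectral theorem together with Mercer's theorem to write
\[
  K(z,w) = \sum_{i\in\N}\kappa_{C,i}\,\phi_{C,i}(z)\,\Bar{\phi_{C,i}(w)},
  \qquad z,w\in C,
\]
where $\{\phi_{C,i}\}_{i\in\N}$ is an orthonormal basis of $L^2(C)$ consisting of eigenfunctions of the restricted operator $\mathcal{K}_C$, and the convergence is uniform on $C\times C$ (this uses the continuity and Hermitian-positive structure of $K$). Then, using $\Bar{K(z,w)}=K(w,z)$, I would compute
\[
  \int_C |K(z,w)|^2\,\dd w
  = \int_C K(z,w)\,K(w,z)\,\dd w
  = \sum_{i\in\N}{\kappa_{C,i}}^2\,|\phi_{C,i}(z)|^2,
\]
which is the diagonal of the kernel of $\mathcal{K}_C^2$, while on the other hand $K(z,z) = \sum_{i\in\N}\kappa_{C,i}\,|\phi_{C,i}(z)|^2$.

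Since $0\leqsl \kappa_{C,i}\leqsl 1$, we have ${\kappa_{C,i}}^2\leqsl\kappa_{C,i}$ termwise, whence $\int_C|K(z,w)|^2\,\dd w \leqsl K(z,z)$ for every $z \in C$. Specializing to $z=0$ with $C=D_r$ and letting $r\uparrow\infty$, the monotone convergence theorem gives $\int_\C|K(0,w)|^2\,\dd w \leqsl K(0,0)$, as required.

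The only genuine obstacle is justifying the interchange/pointwise validity of the spectral expansion on the diagonal; the standard trick is Mercer's theorem, which is applicable because $K$ is continuous, Hermitian, and the operator is trace class and nonnegative on each compact set. Once uniform convergence on $C\times C$ is in hand, every remaining step (the rearrangement of the square integral, the termwise comparison ${\kappa_{C,i}}^2\leqsl \kappa_{C,i}$, and the monotone limit $C\uparrow\C$) is routine.
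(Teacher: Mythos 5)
Your proof is correct and follows essentially the same route as the paper: restrict to a compact set containing the origin, apply Mercer's expansion with the orthonormal eigenfunctions, use $\kappa_{C,i}^2\leqsl\kappa_{C,i}$ since the spectrum lies in $[0,1]$, and let $C\uparrow\C$ by monotone convergence. No substantive differences.
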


\begin{proof}
For a compact set $C\in\B(\C)$ such that $0\in C$, let $K_C$ denote
the restriction of $K$ on $C$.
Let also $\kappa_{C,i}$ and $\varphi_{C,i}$, $i\in\N$, denote
respectively the nonzero eigenvalues of $K_C$ and the corresponding
orthonormal eigenfunctions; that is,
\begin{equation}\label{eq:orthonormal}
  \int_C
    \varphi_{C,i}(z)\,\Bar{\varphi_{C,j}(z)}\,
  \dd z
  = \begin{cases}
      1 &\quad\text{for $i=j$,}\\
      0 &\quad\text{for $i\ne j$.}
    \end{cases}%
\end{equation}
Then Mercer's theorem states that the following spectral expansion
holds (see, e.g., \cite{Merc09});
\begin{equation}\label{eq:Mercer}
  K_C(z,w)
  = \sum_{i=1}^\infty
      \kappa_{C,i}\,\varphi_{C,i}(z)\,\Bar{\varphi_{C,i}(w)},
  \quad z, w\in C.
\end{equation}
Thus we have
\begin{align*}
  \int_C|K(0,z)|^2\,\dd z
  &= \int_C|K_C(0,z)|^2\,\dd z
   = \sum_{i=1}^\infty
       {\kappa_{C,i}}^2\,|\varphi_{C,i}(0)|^2
  \leqsl K_C(0,0) = K(0,0),
\end{align*}
where the second equality follows from \eqref{eq:orthonormal} and
\eqref{eq:Mercer}, the inequality holds since $\kappa_{C,i}\in(0,1]$,
$i\in\N$, and the last equality follows since $0\in C$.
Finally, letting $C\uparrow\C$, we obtain \eqref{eq:boundM}.
\end{proof}

Note that Lemma~\ref{lem:kernel} implies that $\int_{\C}
|K(0,z)|^2\,\dd z \leqsl \lambda$ in our stationary case with
intensity~$\lambda\in(0,\infty)$.
Using this, we prove Lemma~\ref{lem:determinantal} as follows.

\begin{proof}[Proof of Lemma~\ref{lem:determinantal}]
Let $\Prb^!$ denote the reduced Palm probability with respect to the
marked point process $\Phi_H=\{(X_i,H_i)\}_{i\in\N}$ and let $C$
denote a bounded set in $\B(\C)$.
Since a determinantal point process is also determinantal under the
(reduced) Palm distribution (see, e.g., \cite{ShirTaka03}), $\Phi(C)$
under $\Prb^!$ has the same distribution as $\sum_{i\in\N} B_{C,i}$
with certain mutually independent Bernoulli random variables
$B_{C,i}$, $i\in\N$ (see, e.g., \cite[Sec.~4.5]{HougKrisPereVira09}).
Thus, the Chernoff-Hoeffding bound for an infinite sum with finite
mean~(see, e.g., \cite{Cher52,Hoef63} for a finite sum) states that,
for any $\epsilon\in[0,1)$, there exists a
  $c_\epsilon>0$ such that
\begin{equation}\label{eq:C-H}
  \Prb^!\bigl(
    \Phi(C) \leqsl \epsilon\,\Exp^!\Phi(C)
  \bigr)
  \leqsl \ee^{-c_\epsilon\,\Exp^!\Phi(C)},
\end{equation}
where $\Exp^!$ denotes the expectation with respect to $\Prb^!$.
On the other hand, the kernel of $\Phi$ under the reduced Palm
distribution is given by (see \cite{ShirTaka03})
\[
  K^!(z,w)
  = \frac{K(z,w)\,K(0,0) - K(z,0)\,K(0,w)}{K(0,0)},
  \quad z,w\in\C,
\]
whenever $K(0,0)>0$, which is ensured in our stationary case with
$K(0,0)=\lambda$.
Therefore, the intensity function of $\Phi$ under $\Prb^!$ reduces to
\begin{equation}\label{eq:1-corr}
  \rho^!_1(z)
  = K^!(z,z)
  = \lambda - \frac{|K(0,z)|^2}{\lambda},
\end{equation}
so that, Lemma~\ref{lem:kernel} with $K(0,0)=\lambda$ yields
\begin{equation}\label{eq:Palmmean}
  \Exp^!\Phi(C)
  = \int_C \rho^!_1(z)\,\dd z
  \geqsl \lambda\,\mu(C) - 1,
\end{equation}
where $\mu$ denotes the Lebesgue measure on $\C$.

\paragraph{\normalfont\textit{Proof of (i):}}
Note that $\Prb^o(|X_k| > r) = \Prb^!(\Phi(D_r) \leqsl k-1)$.
Since $\Exp^!\Phi(D_r) \geqsl \lambda\,\pi\,r^2 - 1$ from
\eqref{eq:Palmmean}, applying this to \eqref{eq:C-H} yields
\[
  \Prb^!\bigl(
    \Phi(D_r) \leqsl \epsilon\,( \lambda\,\pi\,r^2 - 1)
  \bigr)
  \leqsl \ee^{c_\epsilon}\,\ee^{-c_\epsilon\,\lambda\,\pi\,r^2}\!.
\]
Hence, for any $\epsilon\in(0,1)$ and $k\in\N$, we can take $r_k>0$
satisfying $\epsilon\,(\lambda\,\pi\,{r_k}^2 - 1)\geqsl k-1$, which
implies \eqref{eq:bound1}.

\paragraph{\normalfont\textit{Proof of (ii):}}
We here derive an upper bound on $\Prb^o(R(o)>r)$ by exploiting Foss
\& Zuyev's seven petals~\cite{FossZuye96}, which are considered to
obtain an upper bound on the tail distribution of the circumscribed
radius of the typical Poisson-Voronoi cell.
Consider a collection of seven disks with a common radius~$r$ centered
at points~$(r, 2\pi k/7)$, $k=0,1,\ldots,6$, in polar coordinates.
The petal~$0$ is given as the intersection of the two disks centered
at $(r,0)$, $(r, 2\pi/7)$ and the angular domain between the rays
$\phi=0$ and $\phi=2\pi/7$.
The petal~$k$ is the rotation copy of petal~$0$ by angle $2\pi k/7$
for $k=1,2,\ldots,6$ (see Figure~\ref{fig:petals}).
Let $\mathcal{P}_{r,k}$, $k=0,1,\ldots,6$, denote the set formed by
petal~$k$ on the complex plane~$\C$.
Then, according to the discussion in the proof of Lemma 1 of
\cite{FossZuye96},
\begin{align}\label{eq:FoZu96}
  \Prb^o(R(o) > r)
  &\leqsl \Prb^!\biggl(
            \bigcup_{k=0}^6\{\Phi(\mathcal{P}_{r,k})=0\}
          \biggr)
  \leqsl 7\,\Prb^!(\Phi(\mathcal{P}_{r,0})=0),
\end{align}
where the second inequality follows from the isotropy of $\Phi$ under
the Palm distribution.
Now, we can apply inequality~\eqref{eq:C-H} with $\epsilon=0$ and we
have
\begin{equation}\label{eq:C-H-petal}
  \Prb^!( \Phi(\mathcal{P}_{r,0}) = 0 )
  \leqsl \ee^{-c_0\,\Exp^!\Phi(\mathcal{P}_{r,0})}.
\end{equation}
Hence, \eqref{eq:bound2} holds since $\Exp^!\Phi(\mathcal{P}_{r,0})
\geqsl \lambda\,\mu(\mathcal{P}_{r,0}) - 1$ and $\mu(\mathcal{P}_{r,0}) =
2\,r^2\,(\pi/7 + \sin(\pi/7)\,\cos(3\,\pi/7))$.
\end{proof}

\begin{figure}
\begin{center}
\begin{tikzpicture}[scale=.6]
\draw[->](-6.5,0)--(6.5,0);
\draw[->](0,-6.5)--(0,6.5);
\foreach \x in {0, 51.42857142857143, 102.8571428571429,
  154.2857142857143, 205.7142857142857, 257.1428571428571,
  308.5714285714286}
  \draw[rotate=\x] (3,0) circle (3cm);
\fill[green!20!white] (0,0) circle (3.74093881115cm);
\foreach \x in {0, 51.42857142857143, 102.8571428571429,
  154.2857142857143, 205.7142857142857, 257.1428571428571,
  308.5714285714286}
  \fill[rotate=\x,rotate around={51.4285714286:(3,0)},green!20!white]
  (3cm,0cm)--(6cm,0cm) arc (0:51.4285714286:3cm);
\foreach \x in {0, 51.42857142857143, 102.8571428571429,
  154.2857142857143, 205.7142857142857, 257.1428571428571,
  308.5714285714286}
  \fill[rotate=\x,rotate around={257.1428571428571:(3,0)},green!20!white]
    (3cm,0cm)--(6cm,0cm) arc (0:51.4285714286:3cm);
\foreach \x in {0, 51.42857142857143, 102.8571428571429,
  154.2857142857143, 205.7142857142857, 257.1428571428571,
  308.5714285714286}
  \draw[rotate=\x,rotate around={51.4285714286:(3,0)}]
  (6cm,0cm) arc (0:51.4285714286:3cm);
\foreach \x in {0, 51.42857142857143, 102.8571428571429,
  154.2857142857143, 205.7142857142857, 257.1428571428571,
  308.5714285714286}
  \draw[rotate=\x,rotate around={257.1428571428571:(3,0)}]
    (6cm,0cm) arc (0:51.4285714286:3cm);
\foreach \x in {0, 51.42857142857143, 102.8571428571429,
  154.2857142857143, 205.7142857142857, 257.1428571428571,
  308.5714285714286}
  \draw[rotate=\x](0,0)--(3.74093881115,0);
\foreach \x in {0, 51.42857142857143, 102.8571428571429,
  154.2857142857143, 205.7142857142857, 257.1428571428571,
  308.5714285714286}
  \filldraw[rotate=\x] (3,0) circle (.7mm);
\draw (3,0) node[below,inner sep=1.5mm]{$r$};
\draw (7mm,0mm) arc (0:51.4285714286:7mm) node[inner xsep=1.5mm,right]{$\frac{2\pi}{7}$};
\end{tikzpicture}
\end{center}
\caption{Foss \& Zuyev's seven petals~(\cite{FossZuye96}).}\label{fig:petals}
\end{figure}
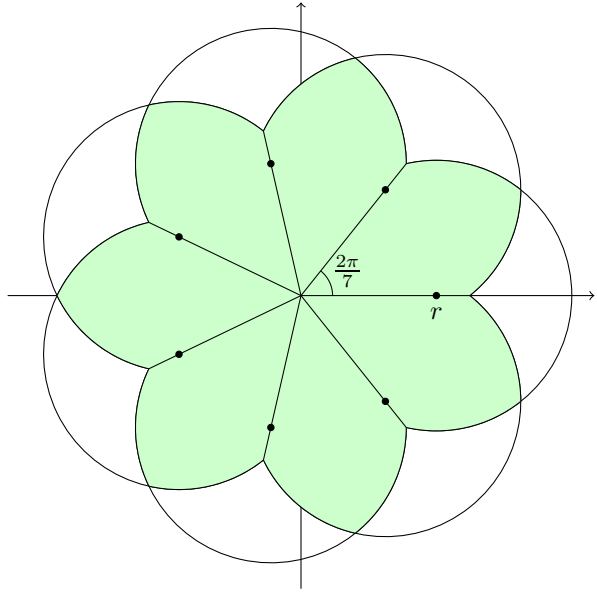

\begin{remark}
The first part~(i) of Lemma~\ref{lem:determinantal} (as well as the
first part~\eqref{eq:Poi_distance} of Lemma~\ref{lem:Poi_A}) can be
extended to a determinantal point process on $\R^d$
(see~\cite[Lemma~5.6]{BlasYogeYuki16}).
We can take $c_0$ in \eqref{eq:C-H-petal} equal to $1$ since
determinantal point processes are weakly sub-Poisson (in particular,
due to the $\nu$-weakly sub-Poisson property) (see \cite{BlasYoge14}
for details).
\end{remark}

\begin{remark}
When the kernel~$K$ of a determinantal point process is explicitly
specified, it may be possible to obtain a tighter upper bound on the
tail probability of the circumscribed radius of the typical Voronoi
cell.
For example, the case of an $\alpha$-Ginibre point process is given by
the following corollary.
\end{remark}

\begin{corollary}\label{cor:Ginibre-radius}
For an $\alpha$-Ginibre point process, the circumscribed radius for
the typical Voronoi cell $\mathcal{C}(o)$ satisfies
\begin{equation}\label{eq:covradius}
  \Prb^0(R(o) > r)
  \le 7\,\ee^{-(u_\alpha(r)\vee v_\alpha(r))},
\end{equation}
where
\begin{align*}
  u_\alpha(r)
  &= \frac{1}{7}\,
     \Bigl\{
       4r^2\,\cos^2\frac{2\pi}{7}
       - \alpha\,\Bigl[
           1 - \exp\Bigl(
                 -\frac{4 r^2}{\alpha}\,\cos^2\frac{2\pi}{7}
               \Bigr)
         \Bigr]           
     \Bigr\},
  \\
  v_\alpha(r)
  &= \frac{2 r^2}{\pi}\,
     \Bigl(
       \frac{\pi}{7} + \sin\frac{\pi}{7}\,\cos\frac{3 \pi}{7}
     \Bigr)
     - \frac{\alpha}{7}\,
       \Bigl[
         1 - \exp\Bigl(
               - \frac{4 r^2}{\alpha}\,\cos^2\frac{\pi}{7}
             \Bigr)
       \Bigr].
\end{align*}
\end{corollary}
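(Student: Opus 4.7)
The plan is to follow the template of the proof of Lemma~\ref{lem:determinantal}(ii), specialized to the $\alpha$-Ginibre process and sharpened by the fact (noted in the preceding remark) that $c_0 = 1$ is admissible in~\eqref{eq:C-H-petal}. Combining the seven-petal bound~\eqref{eq:FoZu96} with~\eqref{eq:C-H-petal} gives $\Prb^o(R(o)>r) \leqsl 7\,\ee^{-\Exp^!\Phi(\mathcal{P}_{r,0})}$, so the corollary reduces to showing the lower bound $\Exp^!\Phi(\mathcal{P}_{r,0}) \geqsl u_\alpha(r) \vee v_\alpha(r)$.

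First I would substitute the $\alpha$-Ginibre kernel~\eqref{eq:alpha_kernel} into~\eqref{eq:1-corr} to obtain the reduced Palm intensity $\rho^!_1(z) = \pi^{-1}(1-\ee^{-|z|^2/\alpha})$, which is nonnegative and gives
\[
\Exp^!\Phi(\mathcal{P}_{r,0})
 = \lambda\,\mu(\mathcal{P}_{r,0})
   - \frac{1}{\pi}\int_{\mathcal{P}_{r,0}}\ee^{-|z|^2/\alpha}\,\dd z,
\]
with $\lambda\,\mu(\mathcal{P}_{r,0}) = (2r^2/\pi)(\pi/7 + \sin(\pi/7)\cos(3\pi/7))$ already computed in the proof of Lemma~\ref{lem:determinantal}(ii). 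Next, I would describe the petal in polar coordinates: a point $(s,\phi)$ with $\phi\in[0,2\pi/7]$ lies in $\mathcal{P}_{r,0}$ iff it lies in both disks of radius $r$ centered at $(r,0)$ and $(r,2\pi/7)$, which amounts to $s \leqsl 2r\min(\cos\phi,\cos(2\pi/7-\phi))$. This radial bound attains its maximum $2r\cos(\pi/7)$ at $\phi = \pi/7$ (the far vertex of the lens) and its minimum $2r\cos(2\pi/7)$ at $\phi\in\{0,2\pi/7\}$, yielding two elementary inclusions: the inscribed sector $S_{\mathrm{in}}$ of angular opening $[0,2\pi/7]$ and radius $2r\cos(2\pi/7)$ is contained in $\mathcal{P}_{r,0}$, while $\mathcal{P}_{r,0}$ is itself contained in the circumscribed sector $S_{\mathrm{out}}$ of the same angular opening and radius $2r\cos(\pi/7)$.

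These inclusions translate directly into the two bounds. For $v_\alpha(r)$, the outer inclusion gives $(1/\pi)\int_{\mathcal{P}_{r,0}}\ee^{-|z|^2/\alpha}\,\dd z \leqsl (1/\pi)\int_{S_{\mathrm{out}}}\ee^{-|z|^2/\alpha}\,\dd z$, and the right-hand side evaluates in polar coordinates (via the substitution $t = s^2/\alpha$) to $(\alpha/7)(1-\ee^{-4r^2\cos^2(\pi/7)/\alpha})$; subtracting from $\lambda\,\mu(\mathcal{P}_{r,0})$ recovers $v_\alpha(r)$. For $u_\alpha(r)$, nonnegativity of $\rho^!_1$ combined with the inner inclusion gives $\Exp^!\Phi(\mathcal{P}_{r,0})\geqsl \int_{S_{\mathrm{in}}}\rho^!_1(z)\,\dd z$, and evaluating the latter in polar coordinates reproduces $u_\alpha(r)$ exactly.

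The main obstacle is not any single computation---each is a one-dimensional Gaussian integral against a uniform angular factor---but the geometric bookkeeping: correctly identifying the inscribed and circumscribed sectors of the lens-shaped petal from its polar description and checking that the resulting closed forms match the definitions of $u_\alpha$ and $v_\alpha$. Apart from this, the argument is a routine unpacking of the setup already laid down in Lemma~\ref{lem:determinantal}.
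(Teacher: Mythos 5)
Your proposal is correct and follows essentially the same route as the paper's proof: the paper likewise reduces to bounding $\Exp^!\Phi(\mathcal{P}_{r,0})$ from below via the inclusions $\mathcal{S}_{\eta_1}\subset\mathcal{P}_{r,0}\subset\mathcal{S}_{\eta_2}$ with $\eta_1=2r\cos(2\pi/7)$ and $\eta_2=2r\cos(\pi/7)$, obtaining $u_\alpha$ from the inscribed sector and $v_\alpha$ from the Lebesgue measure of the petal minus the Gaussian integral over the circumscribed sector, then applies \eqref{eq:FoZu96} and \eqref{eq:C-H-petal} with $c_0=1$. Your polar-coordinate derivation of the two sector inclusions is just a more explicit justification of the same geometric step.
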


\begin{proof}
By the kernel of the $\alpha$-Ginibre point process
in~\eqref{eq:alpha_kernel}, the intensity function
of~\eqref{eq:1-corr} under the (reduced) Palm distribution reduces to
\begin{equation}\label{eq:Ginibre-Palm-intensity}
  \rho_1^!(z)
  = \frac{1}{\pi}\,\bigl(1 -  \ee^{-|z|^2/\alpha}\bigr),
  \quad z\in\C.
\end{equation}
We obtain two lower bounds of $\Exp^!\Phi(\mathcal{P}_{r,0})$ as
follows.
Let $\mathcal{S}_\eta$ denote the circular sector centered at the
origin with radius~$\eta$ and the angular domain between $\phi=0$ and
$\phi=2\,\pi/7$.
Taking $\eta_1=2r\,\cos(2\pi/7)$ and $\eta_2=2r\,\cos(\pi/7)$, we have
$\mathcal{S}_{\eta_1}\subset\mathcal{P}_{r,0}\subset\mathcal{S}_{\eta_2}$.
Therefore, applying \eqref{eq:Ginibre-Palm-intensity}, we have the
first lower bound;
\begin{align*}
  \Exp^!\Phi(\mathcal{P}_{0,r})
 &\ge \Exp^!\Phi(\mathcal{S}_{\eta_1})
  = \int_{\mathcal{S}_{\eta_1}} \rho^!_1(z)\,\dd z
  = \frac{1}{7}\,
    \bigl[
      {\eta_1}^2 + \alpha\,(\ee^{-{\eta_1}^2/\alpha} - 1)
    \bigr]= u_\alpha(r).
\end{align*}
The second lower bound is given by
\begin{align*}
  \Exp^!\Phi(\mathcal{P}_{0,r})
 &=  \int_{\mathcal{P}_{0,r}}\rho_1^!(z)\,\dd z
  \ge \frac{1}{\pi}\,
      \Bigl(
        \mu(\mathcal{P}_{r,0}) -
        \int_{\mathcal{S}_{\eta_2}}\ee^{-|z|^2/\alpha}\,\dd z
      \Bigr) = v_\alpha(r).
\end{align*}
Hence, we have \eqref{eq:covradius} from \eqref{eq:FoZu96} and
\eqref{eq:C-H-petal} with $c_0=1$.
\end{proof}

Indeed, when $\alpha=1$ for example, we can numerically compute 
$r_*\approx0.5276\cdots$ such that $u_1(r)>v_1(r)$ for $r<r_*$ and
$u_1(r)<v_1(r)$ for $r>r_*$.
We are now ready to give the tail asymptotics of the SIR distribution
when the BSs are deployed according to an $\alpha$-Ginibre point
process.

\begin{corollary}\label{cor:Ginibre}
Suppose that $\Phi=\{X_i\}_{i\in\N}$ is an $\alpha$-Ginibre point
process.
When the propagation effects~$H_i$, $i\in\N$, satisfy Condition~(A) of
Theorem~\ref{thm:general}, we have
\begin{align}\label{eq:GiniAsym}
  \lim_{\theta\to\infty}\theta^{1/\beta}\,
    \Prb(\SIR_o > \theta)
 &= \frac{\alpha\,\Exp({H_1}^{1/\beta})}{\Gamma(1+1/\beta)}
    \int_0^\infty
      \prod_{i=1}^\infty
        \biggl[
          1 - \alpha
          + \frac{\alpha}{i!}
            \int_0^\infty
              \ee^{-y}\,y^i\,
              \Lpl_H\Bigl(\Bigl(\frac{t}{y}\Bigr)^\beta\Bigr)\,
            \dd y\,
        \biggr]
    \dd t.
\end{align}
\end{corollary}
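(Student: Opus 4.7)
}
The plan is to apply Theorem~\ref{thm:general} with $d=2$, using that Lemma~\ref{lem:determinantal} verifies Condition~(B) for the $\alpha$-Ginibre process and that Condition~(A) is assumed, so the right-hand side of \eqref{eq:GH15Thm4} is the target quantity. Since the $\alpha$-Ginibre has intensity $\lambda = 1/\pi$ and $\pi_2=\pi$, the prefactor $\pi_d\lambda$ collapses to $1$, and the task reduces to computing
\[
  \Exp^o\biggl[
    \biggl(\sum_{i=1}^\infty \frac{H_i}{|X_i|^{2\beta}}\biggr)^{-1/\beta}
  \biggr]
  = \frac{1}{\Gamma(1/\beta)}\int_0^\infty s^{-1+1/\beta}\,
      \Exp^!\!\biggl[
        \prod_{i=1}^\infty \Lpl_H\!\Bigl(\frac{s}{|X_i|^{2\beta}}\Bigr)
      \biggr]\dd s,
\]
where $\Exp^!$ is the expectation under the reduced Palm distribution and the inner expectation over the $H_i$ has been carried out first, exactly as in the proof of Theorem~\ref{thm:general}.

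The next step is to exploit the determinantal structure. By a result of Shirai--Takahashi, $\Phi$ is still determinantal under $\Prb^!$, and the Palm kernel is $K^!_\alpha(z,w) = K_\alpha(z,w) - K_\alpha(z,0)\,K_\alpha(0,w)/K_\alpha(0,0)$. Expanding $K_\alpha$ in the orthonormal basis $\varphi_n(re^{\mathrm{i}\theta}) = e^{\mathrm{i}n\theta}\,r^n\,\ee^{-r^2/(2\alpha)}/\sqrt{\pi\alpha^{n+1}\,n!}$, $n\geqsl 0$, of $L^2(\C)$, one sees that $K_\alpha = \alpha\sum_{n\geqsl 0}|\varphi_n\rangle\langle\varphi_n|$ and that the subtraction above removes exactly the $n=0$ term, giving
\[
  K^!_\alpha = \alpha\sum_{n=1}^\infty |\varphi_n\rangle\langle\varphi_n|.
\]
The Laplace functional identity for determinantal processes then yields
\[
  \Exp^!\!\biggl[
    \prod_{i=1}^\infty \Lpl_H\!\Bigl(\frac{s}{|X_i|^{2\beta}}\Bigr)
  \biggr]
  = \det\!\bigl(I - (1-g_s)\,K^!_\alpha\bigr),
  \quad g_s(z) := \Lpl_H\!\bigl(s/|z|^{2\beta}\bigr).
\]
Crucially, because $1-g_s$ is rotationally invariant while $\varphi_n$ has angular dependence $e^{\mathrm{i}n\theta}$, the operator $(1-g_s)K^!_\alpha$ preserves and has distinct angular modes; thus the Fredholm determinant factors as an infinite product indexed by $n\geqsl 1$, each factor of rank one. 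I expect this is the delicate step: one must justify the block-diagonal reduction and the resulting product formula for the Fredholm determinant carefully using absolute convergence.

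Passing to polar coordinates in each rank-one factor and substituting $y=r^2/\alpha$ gives
\[
  \det\!\bigl(I-(1-g_s)K^!_\alpha\bigr)
  = \prod_{i=1}^\infty\biggl[
      1 - \alpha + \frac{\alpha}{i!}
      \int_0^\infty
        \ee^{-y}\,y^i\,\Lpl_H\!\Bigl(\frac{s}{(\alpha y)^\beta}\Bigr)\dd y
    \biggr].
\]
Finally, the change of variable $s=(\alpha t)^\beta$, which sends $s/(\alpha y)^\beta$ to $(t/y)^\beta$ and gives $s^{-1+1/\beta}\dd s = \beta\alpha\,\dd t$, together with $\beta/\Gamma(1/\beta) = 1/\Gamma(1+1/\beta)$, transforms the outer integral into precisely the form in~\eqref{eq:GiniAsym}. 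Multiplying by $\pi_2\lambda\,\Exp(H_1^{1/\beta}) = \Exp(H_1^{1/\beta})$ from Theorem~\ref{thm:general} then yields the stated expression.
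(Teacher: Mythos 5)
Your proposal is correct and arrives at \eqref{eq:GiniAsym}, but it takes a genuinely different route at the key step. The paper, after the same reduction (Condition~(B) via Lemma~\ref{lem:determinantal}, the identity $x^{-1/\beta}=\Gamma(1/\beta)^{-1}\int_0^\infty\ee^{-xs}s^{-1+1/\beta}\,\dd s$, and a change of variables), evaluates $\Exp^o\bigl[\prod_{i\geqsl1}\Lpl_H\bigl((t/|X_i|^2)^\beta\bigr)\bigr]$ by invoking Proposition~\ref{prp:Kostlan}~(ii): under the reduced Palm distribution the squared moduli $\{|X_i|^2\}$ are an independent $\alpha$-thinning of independent $\Gam(i+1,\alpha^{-1})$ variables, so the expectation of the product factorizes immediately into $\prod_i\bigl[1-\alpha+\alpha\,\Exp\Lpl_H\bigl((t/Y_i)^\beta\bigr)\bigr]$. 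You instead compute the same quantity as the Fredholm determinant $\det\bigl(I-(1-g_s)K_\alpha^!\bigr)$ and diagonalize by angular modes; your identification of the Palm kernel as $K_\alpha$ with the $n=0$ rank-one mode removed is exactly right, and the resulting eigenvalues $\alpha\int(1-g_s)|\varphi_n|^2\,\dd z$ reproduce, after $y=r^2/\alpha$ and $s=(\alpha t)^\beta$, precisely the factors $1-\alpha+\frac{\alpha}{n!}\int_0^\infty\ee^{-y}y^n\Lpl_H\bigl((t/y)^\beta\bigr)\,\dd y$. The two arguments are two faces of the same fact --- the Kostlan-type representation in Proposition~\ref{prp:Kostlan} is the probabilistic expression of the spectral decomposition you exploit --- but yours must additionally justify the trace-class property and the absolute convergence needed to factor the Fredholm determinant (the step you rightly flag; it goes through because $\int_0^\infty\bigl[1-\Lpl_H(s/r^{2\beta})\bigr]\,r\,\dd r<\infty$ under Condition~(A), by the same computation as in \eqref{eq:prp4prf4}), whereas the paper gets the factorization for free from the quoted proposition. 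What your derivation buys is self-containedness modulo the Shirai--Takahashi Palm-kernel formula already used elsewhere in the paper, and a structural explanation of where the $1-\alpha+\alpha(\cdots)$ factors come from; the paper's route is shorter given Proposition~\ref{prp:Kostlan} as a black box.
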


For the proof of Corollary~\ref{cor:Ginibre}, we use the following
proposition which is a consequence of \cite{Gold10} and \cite{Kost92}
(see also \cite{MiyoShir16b}).

\begin{proposition}\label{prp:Kostlan}
\begin{enumerate}[(i)]  
\item Let $X_i$, $i\in\N$, denote the points of an $\alpha$-Ginibre
  point process.
  Then, the set $\{|X_i|^2\}_{i\in\N}$ has the same distribution as
  $\check{\bsym{Y}} = \{\check{Y_i}\}_{i\in\N}$, which is extracted
  from $\bsym{Y} = \{Y_i\}_{i\in\N}$ such that $Y_i$, $i\in\N$, are
  mutually independent with $Y_i\sim\Gam(i, \alpha^{-1})$ for each
  $i\in\N$ and each $Y_i$ is added in $\check{\bsym{Y}}$ with
  probability~$\alpha$ and discarded with $1-\alpha$ independently of
  others.
\item Let $X_i$, $i\in\N$, denote the points of an $\alpha$-Ginibre
  point process under the reduced Palm distribution.
  Then, the same statement as (i) holds except for replacing
  $Y_i\sim\Gam(i, \alpha^{-1})$ by $Y_i\sim\Gam(i+1, \alpha^{-1})$.
\end{enumerate}
\end{proposition}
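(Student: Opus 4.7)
The plan is to chain three ingredients: Theorem~\ref{thm:general} (to reduce the SIR tail to a single expectation under $\Prb^o$), the integral identity $x^{-1/\beta}=\Gamma(1/\beta)^{-1}\int_0^\infty \ee^{-xs}s^{-1+1/\beta}\,\dd s$ (to convert the negative power of the interference into its Laplace functional), and Proposition~\ref{prp:Kostlan}(ii) (to replace the geometry of the $\alpha$-Ginibre under $\Prb^o$ by independent Gamma variables subject to an independent Bernoulli thinning).

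First I would verify that Theorem~\ref{thm:general} applies: Condition~(A) is hypothesised, and Condition~(B) follows from Lemma~\ref{lem:determinantal}, which gives Gaussian-type tail bounds on $|X_k|$ and $R(o)$ under $\Prb^o$ and hence all moments. Since $d=2$ and the $\alpha$-Ginibre has intensity $\lambda=1/\pi$, the prefactor $\pi_d\,\lambda$ in \eqref{eq:GH15Thm4} reduces to $1$, so it suffices to identify
\[
  \Exp^o\biggl[\biggl(\sum_{i=1}^\infty\frac{H_i}{|X_i|^{2\beta}}\biggr)^{-1/\beta}\biggr]
\]
with the right-hand side of \eqref{eq:GiniAsym} divided by $\Exp(H_1^{1/\beta})$. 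Using the Gamma identity together with conditioning on $\Phi$ (and the independence of $\{H_i\}$ from $\Phi$), this expectation rewrites as
\[
  \frac{1}{\Gamma(1/\beta)}\int_0^\infty s^{-1+1/\beta}\,\Exp^o\biggl[\prod_{i=1}^\infty\Lpl_H\bigl(s/|X_i|^{2\beta}\bigr)\biggr]\dd s.
\]

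Under $\Prb^o$ the non-origin points of $\Phi$ are distributed as the whole configuration under the reduced Palm distribution, so Proposition~\ref{prp:Kostlan}(ii) applies and realises $\{|X_i|^2\}$ as the $\alpha$-Bernoulli thinning of independent $Y_i\sim\Gam(i+1,\alpha^{-1})$. Since the Bernoulli marks and the $Y_i$'s are mutually independent and all factors lie in $[0,1]$, Fubini-Tonelli collapses the expectation of the infinite product into the infinite product of the factors $(1-\alpha)+\alpha\,\Exp\,\Lpl_H(s/Y_i^\beta)$. I would then evaluate each inner expectation by a linear rescaling of the Gamma density that absorbs the $\alpha$-parameter, and perform an outer change of variable $t$ chosen so that the argument of $\Lpl_H$ becomes $(t/u)^\beta$, matching the integrand in \eqref{eq:GiniAsym}. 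Combining the two Jacobians with the identity $\beta/\Gamma(1/\beta)=1/\Gamma(1+1/\beta)$ produces the prefactor $\alpha/\Gamma(1+1/\beta)$; multiplying by $\Exp(H_1^{1/\beta})$ gives \eqref{eq:GiniAsym}.

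The only technical point is the integrability required for the Fubini swaps (between the outer $s$-integral, the $\Exp^o$ expectation, the infinite product over $i$, and the inner Gamma integrals), together with the convergence of the infinite product. Both are controlled by exactly the estimate used to establish \eqref{eq:A2} at the end of the proof of Theorem~\ref{thm:general}: truncate the product at a finite index $k$ with $p\,\beta\,k>1$, use the decay $\Lpl_H(s)\leqsl c_H\,s^{-p}$ from Condition~(A), and bound $|X_k|^d$ via the Gaussian-type tail of Lemma~\ref{lem:determinantal}(i). Hence no new integrability argument beyond that of Theorem~\ref{thm:general} is required.
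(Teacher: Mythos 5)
Your proposal does not prove the statement in question. The statement is Proposition~\ref{prp:Kostlan} itself --- the distributional identity asserting that the squared moduli $\{|X_i|^2\}_{i\in\N}$ of an $\alpha$-Ginibre point process (respectively, of the process under the reduced Palm distribution) are distributed as an independent Bernoulli($\alpha$) thinning of independent $\Gam(i,\alpha^{-1})$ (respectively $\Gam(i+1,\alpha^{-1})$) random variables. What you have written is instead a proof of Corollary~\ref{cor:Ginibre}: you invoke Theorem~\ref{thm:general}, the Gamma integral identity, and then \emph{apply} Proposition~\ref{prp:Kostlan}(ii) as a known ingredient to derive \eqref{eq:GiniAsym}. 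In other words, the very result you were asked to establish appears in your argument as an unproved lemma, so as a proof of the proposition the proposal is vacuous (circular).

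To actually prove the proposition you would need a different set of tools, none of which appear in your write-up: (a) the radial structure theorem for determinantal point processes with rotation-invariant kernels (Kostlan's theorem for the Ginibre ensemble, stating that for $\alpha=1$ the set $\{|X_i|^2\}$ has the law of independent $\Gam(i,1)$ variables), which follows from the fact that the monomials $z^{i-1}$ diagonalize the kernel~\eqref{eq:alpha_kernel} restricted to radial statistics; (b) the construction of the $\alpha$-Ginibre process as an independent thinning with retention probability $\alpha$ of the Ginibre process followed by the rescaling $z\mapsto\sqrt{\alpha}\,z$, which converts $\Gam(i,1)$ into the thinned family of $\Gam(i,\alpha^{-1})$ variables in part~(i); and (c) Goldman's identification of the reduced Palm kernel of the Ginibre process, under which the same radial analysis yields the index shift $\Gam(i,\alpha^{-1})\to\Gam(i+1,\alpha^{-1})$ in part~(ii). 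The paper itself does not reprove these facts but explicitly derives the proposition as a consequence of the cited results of Kostlan and Goldman; your proposal neither cites nor reconstructs that chain of reasoning.
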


\begin{proof}[Proof of Corollary~\ref{cor:Ginibre}]
For an $\alpha$-Ginibre point process, we can see by
Lemma~\ref{lem:determinantal} (or
Corollary~\ref{cor:Ginibre-radius}) that $|X_k|$, $k\in\N$, and
$R(o)$ have any order of moments under the Palm distribution~$\Prb^o$;
that is, Condition~(B) of Theorem~\ref{thm:general} is
fulfilled.
Thus, applying the identity $x^{-1/\beta} =
\Gamma(1/\beta)^{-1}\int_0^\infty \ee^{-x\,s}\, s^{-1+1/\beta}\,\dd s$
and the Laplace transform~$\Lpl_H$ to the right-hand side of
\eqref{eq:GH15Thm4}, we have
\begin{align*}
  \Exp^0\biggl[
    \biggl(
      \sum_{i=1}^\infty\frac{H_i}{|X_i|^{2\,\beta}}
    \biggr)^{-1/\beta}
  \biggr]
  &= \frac{1}{\Gamma(1/\beta)}
     \int_0^\infty
       s^{-1+1/\beta}\,
       \Exp^0\biggl[
         \prod_{i=1}^\infty
           \Lpl_H\biggl(
             \frac{s}{|X_i|^{2\,\beta}}
           \biggr)
       \biggr]\,
     \dd s
  \\
  &= \frac{1}{\Gamma(1+1/\beta)}
     \int_0^\infty
       \Exp^0\biggl[
         \prod_{i=1}^\infty
           \Lpl_H\biggl(
             \biggl(\frac{t}{|X_i|^2}\biggr)^\beta
           \biggr)
       \biggr]\,
     \dd t,
  \nonumber
\end{align*}
where the second equality follows by substituting $t=s^{1/\beta}$.
Here, applying $\bsym{Y} = \{Y_i\}_{i\in\N}$ in
Proposition~\ref{prp:Kostlan}~(ii), we have \eqref{eq:GiniAsym}.
\end{proof}

\begin{remark}
When $F_H=\mathrm{Exp}(1)$ (Rayleigh fading without shadowing),
\eqref{eq:GiniAsym} reduces to the result of Theorem~1 in
\cite{MiyoShir14b}.
When $F_H = \Gam(m,\:1/m)$ (Nakagami-$m$ fading without shadowing), we
have $\Lpl_H(s)=(1+s/m)^{-m}$ and $\Exp({H_1}^{1/\beta}) =
\Gamma(m+1/\beta)/(m^{1/\beta}\,(m-1)!)$.
Applying these to the right-hand side of \eqref{eq:GiniAsym} yields
\begin{align*}
  \lim_{\theta\to\infty}\theta^{1/\beta}\,
    \Prb(\SIR_o > \theta)
  &= \frac{\alpha\,\Gamma(m+1/\beta)}
          {\Gamma(1+1/\beta)\,m^{1/\beta}\,(m-1)!}
     \int_0^\infty
       \prod_{i=1}^\infty
         \biggl[
           1 - \alpha
             + \frac{\alpha}{i!}
               \int_0^\infty
                 \frac{\ee^{-y}\,y^i}
                      {\bigl(1 + m^{-1}\,(t/y)^\beta\bigr)^m}\,
               \dd y
         \biggr]\,                      
     \dd t
  \\
  &= \frac{\alpha\,\beta}{B(m,\:1/\beta)}
    \int_0^\infty
      \prod_{i=1}^\infty
        \biggl[
          1 - \alpha
            + \frac{\alpha}{i!}
              \int_0^\infty
                \frac{\ee^{-y}\,y^i}
                     {\bigl(1 + (u/y)^\beta\bigr)^m}\,
              \dd y
        \biggr]\,
    \dd u,
\end{align*}
where we substitute $v=m^{-1/\beta}\,t$ and apply the Beta
function~$B(x,y) = \Gamma(x)\,\Gamma(y)/\Gamma(x+y)$ in the second
equality.
\end{remark}

It is known that $\alpha$-Ginibre point processes converge weakly to
the homogeneous Poisson point process with the same intensity as
$\alpha\to0$ (see~\cite{Gold10}).
The following is an extension of Proposition~5 in~\cite{MiyoShir14b},
where the case of $F_H=\mathrm{Exp}(1)$ is considered.

\begin{proposition}
Let $C^{(\textnormal{$\alpha$-GPP})}(\beta, F_H)$ denote the asymptotic
constant on the right-hand side of \eqref{eq:GiniAsym}.
Then, for any propagation effect distribution~$F_H$ satisfying
Condition~(A) of Theorem~\ref{thm:general},
\begin{equation}\label{eq:alpha-constant}
  \lim_{\alpha\downarrow0}C^{(\textnormal{$\alpha$-GPP})}(\beta, F_H)
  = \frac{\beta}{\pi}\,\sin\frac{\pi}{\beta}.
\end{equation}
\end{proposition}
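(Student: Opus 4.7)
The plan is to reduce the limit to a one-dimensional Laplace-type integral by exploiting that, for small $\alpha$, each factor $1-\alpha(1-g_i(t))$ of the product in~\eqref{eq:GiniAsym} is close to $\ee^{-\alpha(1-g_i(t))}$, where we write $g_i(t) := (i!)^{-1}\int_0^\infty \ee^{-y}\,y^i\,\Lpl_H((t/y)^\beta)\,\dd y$. Setting $S(t) := \sum_{i=1}^\infty (1-g_i(t))$ and interchanging the sum and integral by Tonelli's theorem, one has
\[
 S(t) = \int_0^\infty (1 - \ee^{-y})\,[1 - \Lpl_H((t/y)^\beta)]\,\dd y.
\]
The substitution $z = y/t$ combined with monotone convergence then yields
\[
 \lim_{t\to\infty} \frac{S(t)}{t}
 = \int_0^\infty [1 - \Lpl_H(z^{-\beta})]\,\dd z
 = \Gamma(1-1/\beta)\,\Exp({H_1}^{1/\beta}),
\]
where the last equality follows from $u = z^{-\beta}$ together with the standard identity $\int_0^\infty [1-\Lpl_H(u)]\,u^{-1-1/\beta}\,\dd u = \beta\,\Gamma(1-1/\beta)\,\Exp({H_1}^{1/\beta})$, which is finite by Condition~(A).

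Next, using the elementary inequalities $\ee^{-x/(1-x)} \leqsl 1-x \leqsl \ee^{-x}$ applied termwise to $x=\alpha(1-g_i(t))\in[0,\alpha]$, one obtains the squeeze
\[
 \exp\bigl(-\alpha\,S(t)/(1-\alpha)\bigr) \leqsl \prod_{i=1}^\infty [1-\alpha(1-g_i(t))] \leqsl \exp(-\alpha\,S(t)).
\]
Substituting $u = \alpha\,t$ converts $\alpha \int_0^\infty \prod_i [1-\alpha(1-g_i(t))]\,\dd t$ into $\int_0^\infty \prod_i [1-\alpha(1-g_i(u/\alpha))]\,\dd u$, and for each fixed $u>0$ both sandwich bounds at $t=u/\alpha$ converge pointwise to $\exp(-u\,\Gamma(1-1/\beta)\,\Exp({H_1}^{1/\beta}))$ as $\alpha\downarrow0$.

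To apply dominated convergence, I would fix any $c_0 \in (0,\,\Gamma(1-1/\beta)\,\Exp({H_1}^{1/\beta}))$ and choose $t_0 > 0$ such that $S(t) \geqsl c_0\,t$ for $t \geqsl t_0$; then for $\alpha \in (0,\alpha_0]$ the product at $t=u/\alpha$ is dominated uniformly by $\ee^{c_0\,\alpha_0\,t_0}\,\ee^{-c_0\,u}$, which is integrable on $[0,\infty)$. Dominated convergence then yields
\[
 \lim_{\alpha\downarrow0}\alpha\int_0^\infty \prod_{i=1}^\infty [1-\alpha(1-g_i(t))]\,\dd t
 = \int_0^\infty \ee^{-u\,\Gamma(1-1/\beta)\,\Exp({H_1}^{1/\beta})}\,\dd u
 = \frac{1}{\Gamma(1-1/\beta)\,\Exp({H_1}^{1/\beta})}.
\]
Multiplying by the prefactor $\Exp({H_1}^{1/\beta})/\Gamma(1+1/\beta)$ from~\eqref{eq:GiniAsym} and invoking the reflection formula $\Gamma(1+x)\,\Gamma(1-x) = \pi\,x/\sin(\pi\,x)$ with $x=1/\beta$ delivers the claimed limit $(\beta/\pi)\,\sin(\pi/\beta)$. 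The main technical obstacle is securing the uniform-in-$\alpha$ dominating function in this final step, which rests on the asymptotic linearity $S(t) \sim t\,\Gamma(1-1/\beta)\,\Exp({H_1}^{1/\beta})$ established in the first step; everything else is routine manipulation.
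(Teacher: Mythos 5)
Your proof is correct and follows essentially the same route as the paper's: sandwich the infinite product between exponentials of $-\alpha S(t)$ up to a controlled correction, identify $S(t)=\sum_i\bigl(1-g_i(t)\bigr)=\int_0^\infty(1-\ee^{-y})\bigl[1-\Lpl_H((t/y)^\beta)\bigr]\,\dd y$, and use $S(t)\sim t\,\Gamma(1-1/\beta)\,\Exp({H_1}^{1/\beta})$ together with the reflection formula. The only differences are cosmetic: the paper uses $\ee^{-(1+\delta)x}\leqsl 1-x\leqsl \ee^{-x}$ and the global two-sided bound $c\,t-1\leqsl S(t)\leqsl c\,t$, which lets it integrate the exponential bounds exactly without dominated convergence, whereas you use $\ee^{-x/(1-x)}\leqsl 1-x$ and the rescaling $u=\alpha t$ followed by a dominating function $\ee^{c_0\alpha_0 t_0}\ee^{-c_0 u}$ — both are valid.
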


Note that the right-hand side of \eqref{eq:alpha-constant} is just the
asymptotic constant in Corollary~\ref{cor:Poisson} for the homogeneous
Poisson-based model.

\begin{proof}
The proof essentially follows the similar line to that of
Proposition~5 in \cite{MiyoShir14b}.
Since the asymptotic constant~\eqref{eq:GH15Thm4} in
Theorem~\ref{thm:general} does not depend on the intensity of the
point process, we here choose $\lambda = \alpha/\pi$.
Then, $Y_i\sim\Gam(i+1,\alpha^{-1})$ in
Proposition~\ref{prp:Kostlan}~(ii) is replaced with
$Y_i\sim\Gam(i+1,1)$.
Clearly, the right-hand side of \eqref{eq:GiniAsym} is equal to
\begin{equation}\label{eq:prp4prf1}
  C^{(\textnormal{$\alpha$-GPP})}(\beta, F_H)
  = \frac{\alpha\,\Exp({H_1}^{1/\beta})}{\Gamma(1+1/\beta)}
    \int_0^\infty
      \prod_{i=1}^\infty
        \biggl\{
          1 - \alpha\,
              \Exp\biggl[
                1 - \Lpl_H\Bigl(\Bigl(\frac{t}{Y_i}\Bigr)^\beta\Bigr)
              \biggr]
        \biggr\}\,
    \dd t.
\end{equation}
We here use the fact that, for any $\delta>0$, there exists an
$x_\delta\in(0,1)$ such that $\ee^{-(1+\delta) x} \leqsl 1-x \leqsl
\ee^{-x}$ for $x\in[0,x_\delta]$.
Thus, for $\alpha\in(0, x_\delta]$, the integrand above has upper and
lower bounds such as
\begin{align}\label{eq:prp4prf2}
  &\exp\biggl\{
     -(1+\delta)\,\alpha
      \sum_{i=1}^\infty
        \Exp\biggl[
          1 - \Lpl_H\Bigl(\Bigl(\frac{t}{Y_i}\Bigr)^\beta\Bigr)
        \biggr]
   \biggr\}
  \\ 
  &\le \prod_{i=1}^\infty
         \biggl\{
           1 - \alpha\,
               \Exp\biggl[
                 1 - \Lpl_H\Bigl(\Bigl(\frac{t}{Y_i}\Bigr)^\beta\Bigr)
               \biggr]
         \biggr\}
   \le \exp\biggl\{
         -\alpha
          \sum_{i=1}^\infty
            \Exp\biggl[
              1 - \Lpl_H\Bigl(\Bigl(\frac{t}{Y_i}\Bigr)^\beta\Bigr)
            \biggr]
       \biggr\}.
  \nonumber
\end{align}
Here, applying the density function of $Y_i\sim\Gam(i+1,1)$, $i\in\N$,
we have
\begin{align}\label{eq:alphato0}
  \sum_{i=1}^\infty
    \Exp\biggl[
      1 - \Lpl_H\Bigl(\Bigl(\frac{t}{Y_i}\Bigr)^\beta\Bigr)
    \biggr]
  &= \int_0^\infty  
       (1 - \ee^{-y})\,
       \biggl[
         1 - \Lpl_H\Bigl(\Bigl(\frac{t}{y}\Bigr)^\beta\Bigr)
       \biggr]\,
     \dd y
  \\
  &= t \int_0^\infty
         (1 - \ee^{-t u})\,
         \bigl[
           1 - \Lpl_H(u^{-\beta})
         \bigr]\,
       \dd u,
  \nonumber
\end{align}
where the last equality follows by substituting $u = y/t$.
From the last expression above, we have for any $t>0$,
\begin{equation}\label{eq:prp4prf3}
  t \int_0^\infty
      \bigl[ 1 - \Lpl_H(u^{-\beta}) \bigr]\,
    \dd u
  - 1      
  \le \eqref{eq:alphato0}
  \le t \int_0^\infty
          \bigl[ 1 - \Lpl_H(u^{-\beta}) \bigr]\,
        \dd u,
\end{equation}
and the common integral on both the sides reduces to
\begin{align}\label{eq:prp4prf4}
  \int_0^\infty
    \bigl[ 1 - \Lpl_H(u^{-\beta}) \bigr]\,
  \dd u
  &= \Exp\biggl[
       \int_0^\infty
         \bigl( 1 - \ee^{-u^{-\beta} H_1} \bigr)\,
       \dd u
     \biggr]
  \\
  &= \frac{\Exp({H_1}^{1/\beta})}{\beta}
     \int_0^\infty (1-\ee^{-v})\,v^{-1-1/\beta}\,\dd v
  = \Exp({H_1}^{1/\beta})\,
    \Gamma\Bigl(1 - \frac{1}{\beta}\Bigr),
  \nonumber
\end{align}
where the second equality follows by substituting $v =
H_1\,u^{-\beta}$ and the last equality follows from the integration by
parts.
Hence, applying \eqref{eq:prp4prf2}--\eqref{eq:prp4prf4} to
\eqref{eq:prp4prf1} and using $\Gamma(x)\,\Gamma(1-x) =
\pi\,\csc\pi\,x$ for $x\in(0,1)$, we obtain
\[
  \frac{1}{1 + \delta}\,
  \frac{\beta}{\pi}\,\sin\frac{\pi}{\beta}
  \le C^{(\textnormal{$\alpha$-GPP})}(\beta, F_H)
  \le \ee^{\alpha}\,\frac{\beta}{\pi}\,\sin\frac{\pi}{\beta},
\]
The assertion follows as $\alpha\downarrow0$ since $\delta$ is
arbitrary.
\end{proof}

\subsection{A counterexample}

Finally in this section, we give a simple counterexample that violates
Condition~(B) of Theorem~\ref{thm:general}.
Let $T$ denote a random variable with density function
$f_T(t)=(a-1)\,t^{-a}$, $t\geqsl1$, for $a\in(1,2)$.
Note that $\Exp T = \infty$.
Given a sample of $T$, we consider the mixed and randomly shifted
lattice $\Phi = (\Z\times T\,\Z) + U_T$, where $U_T$ denotes a
uniformly distributed random variable on $[0,1]\times[0,T]$.
The intensity $\lambda$ of $\Phi$ is then $\lambda=\Exp(1/T) =
(a-1)/a<\infty$.
For any nonnegative and measurable function~$g$, the definition of the
Palm probability gives
\begin{align*}
  \Exp^o g(T)
  &= \frac{1}{\lambda}\,
     \Exp\bigl(g(T)\,\Phi(I)\bigr)
   = \frac{1}{\lambda}\,
     \Exp\bigl(g(T)\,\Exp(\Phi(I)\mid T)\bigr)
   = \frac{1}{\lambda}\,\Exp\biggl(\frac{g(T)}{T}\biggr),
\end{align*}
where $I=[0,1]^2$.
Hence, applying $R(o)^2 = (1+T^2)/4$ to the above,
we have
\begin{align*}
  \Exp^o\bigl(R(o)^2\bigr)
  = \frac{1}{4\,\lambda}\,
    \Exp\biggl(
      \frac{1}{T}+T
    \biggr)
  = \frac{1}{4\,\lambda}\,(\lambda + \Exp(T)) = \infty.
\end{align*}

\section{Tail asymptotics for bounded path-loss
  models}\label{sec:bounded}

In this section, we consider bounded and regularly varying path-loss
functions.
We assume that the distribution of propagation effects is light-tailed
and restrict ourselves to two cases of the point process~$\Phi$; one
is a homogeneous Poisson point process on $\R^d$ and the other is an
$\alpha$-Ginibre point process on $\C\simeq\R^2$.
In both the cases, we derive the same logarithmically asymptotic upper
bound on the SIR tail distributions.
Furthermore, when $\Phi$ is a homogeneous Poisson point process and
the propagation effects are exponentially distributed, a
logarithmically asymptotic lower bound with the same order as the
upper bound is obtained.
We first impose an assumption on the path-loss function~$\ell$.

\begin{assumption}\label{asm:bounded}
$\ell$ is nonincreasing, bounded on $[0,\infty)$ and regularly varying
at infinity with index~$-d\beta$, $\beta>1$, in the sense that (see,
e.g., \cite{BingGoldTeug87,Sene76})
\[
  \lim_{x\to\infty}\frac{\ell(t\,x)}{\ell(x)} = t^{-d\,\beta}
  \quad\text{for all $t>0$.}
\]
\end{assumption}

In what follows, we suppose for simplicity that $\ell$ is bounded by
$1$; that is, $\ell(r)\leqsl 1$ for $r\in[0,\infty)$.
Let $g(s) = 1/\ell(s^{1/d})$, $s\geqsl 0$.
By Assumption~\ref{asm:bounded} above, we see that the function~$g$ is
nondecreasing and regularly varying at infinity with index~$\beta$.
Thus, we can define an asymptotic inverse function~$h$ of $g$
satisfying $g(h(z)) \sim h(g(z)) \sim z$ as $z\to\infty$ (see, e.g.,
\cite[Sec.~1.5]{BingGoldTeug87}, \cite[Chap.~1]{Sene76}).
The function~$h$ is asymptotically unique and also regularly varying
at infinity with index~$1/\beta$.
For example, when $\ell(r) = (1+r^{d\beta})^{-1}$, then $g(s) =
1/\ell(s^{1/d}) = 1+s^\beta$ and we can take $h(z) = z^{1/\beta}$.
More generally, if $\ell(r) = \bigl(1 +
r^{d\beta}\,[\log(1+r)]^a\bigr)^{-1}$ with $a \geqsl -d\,\beta$, then
$g(s) = 1 + s^\beta\,\bigl[\log(1+s^{1/d})\bigr]^a$ and we can take
$h(z) = z^{1/\beta}\,(d\beta/\log z)^{a/\beta}$.
The following theorem states that the SIR tail probability
$\Prb(\SIR_o>\theta)$ is asymptotically bounded above by
$\ee^{-\Theta(h(\theta))}$ as $\theta\to\infty$.

\begin{theorem}\label{thm:bound_pathloss}
For the cellular network model described in Section~\ref{sec:intro}
with the path-loss function satisfying Assumption~\ref{asm:bounded},
we suppose that the distribution~$F_H$ of the propagation effects $H_i$,
$i\in\N$, satisfies the following:
\begin{enumerate}[(a)]
\item\label{cond:a}
  It is light-tailed; that is, there exists a (possibly infinite)
  $\zeta_0>0$ such that the moment generating function $\Mgf_H(\zeta)
  = \Exp\ee^{\zeta H_1}$ is finite for $\zeta < \zeta_0$.
\item\label{cond:b}
  The Laplace transform~$\Lpl_H$ satisfies
  $\log\Lpl_H(s) = o(L_{1/\beta}(s))$ as $s\to\infty$ for any
  regularly varying function $L_{1/\beta}$ with index~$1/\beta$.
\end{enumerate}  

If $\Phi=\{X_i\}_{i\in\N}$ is a homogeneous Poisson point process on
$\R^d$ with positive and finite intensity, then using the function~$h$
defined above, we have
\begin{equation}\label{eq:bound_Poisson}
  \limsup_{\theta\to\infty}
    \frac{1}{h(\theta)}\,\log\Prb(\SIR_o>\theta)
  \leqsl - \Gamma\biggl(1-\frac{1}{\beta}\biggr)\,
        {\zeta_0}^{1/\beta}\,\Exp({H_1}^{1/\beta}),
\end{equation}
where $\zeta_0$ is the critical value for the existence of the moment
generating function~$\Mgf_H$ of $H_i$, $i\in\N$.
Moreover, if $d=2$ and $\Phi=\{X_i\}_{i\in\N}$ is an $\alpha$-Ginibre
point process on $\C\simeq\R^2$, we have \eqref{eq:bound_Poisson} as
well.
\end{theorem}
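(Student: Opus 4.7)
The plan is an exponential-tilting (Chernoff) bound that exploits condition~(a) through a fixed tilt $\zeta<\zeta_0$ and condition~(b) to confirm that the resulting boundary contributions are negligible at scale $h(\theta)$. Since $\ell\leqsl1$, the event $\{\SIR_o>\theta\}$ is contained in $\{H_1>\theta I\}$ with $I=\sum_{i\geqsl 2}H_i\ell(|X_i|)$. For any $\zeta\in(0,\zeta_0)$, applying Markov's inequality to $\ee^{\zeta(H_1-\theta I)}$ and using the independence of $H_1$ from $(\Phi,H_2,H_3,\dots)$ yields
\[
  \Prb(\SIR_o>\theta)\leqsl\Mgf_H(\zeta)\,\Exp\biggl[\prod_{i=2}^\infty\Lpl_H\bigl(\zeta\theta\,\ell(|X_i|)\bigr)\biggr].
\]
The factor $\Mgf_H(\zeta)$ is $O(1)$. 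Reinserting the missing $i=1$ term in the product costs at most $1/\Lpl_H(\zeta\theta\ell(|X_1|))\leqsl 1/\Lpl_H(\zeta\theta)$ by $\ell\leqsl 1$ and the monotonicity of $\Lpl_H$, and condition~(b) makes $-\log\Lpl_H(\zeta\theta)=o(h(\theta))$ since $h$ is regularly varying with index~$1/\beta$.

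For the Poisson case, the Laplace-functional representation reduces the task to analysing $-\lambda\int_{\R^d}\bigl(1-\Lpl_H(\zeta\theta\ell(|x|))\bigr)\dd x$. After polar coordinates and the substitution $t=|x|^d$, this becomes $-\lambda\pi_d\int_0^\infty(1-\Lpl_H(\zeta\theta/g(t)))\,\dd t$. The decisive change of variable is $t=h(\zeta\theta)v$: regular variation of~$g$ with index~$\beta$ together with $g(h(\zeta\theta))\sim\zeta\theta$ gives pointwise convergence of the integrand to $1-\Lpl_H(v^{-\beta})$. A dominated-convergence argument then produces the limit $\int_0^\infty(1-\Lpl_H(v^{-\beta}))\,\dd v=\Gamma(1-1/\beta)\,\Exp({H_1}^{1/\beta})$, which is exactly the integral computed in~\eqref{eq:prp4prf4}. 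Combined with $h(\zeta\theta)\sim\zeta^{1/\beta}h(\theta)$, taking $\limsup\log(\cdot)/h(\theta)$ and letting $\zeta\uparrow\zeta_0$ yields~\eqref{eq:bound_Poisson} (up to an intensity-dependent prefactor that is absorbed by the scaling equivalence $(\Phi,\ell)\mapsto(c\Phi,\ell(c^{-1}\cdot))$, which simultaneously rescales $\lambda$ and $h$).

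For the $\alpha$-Ginibre case, I would use Proposition~\ref{prp:Kostlan}: the set $\{|X_i|^2\}$ has the law of a Bernoulli($\alpha$)-thinned independent family with $Y_i\sim\Gam(i,\alpha^{-1})$. Averaging first over the thinning and then over the $Y_i$'s gives
\[
  \Exp\biggl[\prod_{i\geqsl 1}\Lpl_H(\zeta\theta\ell(|X_i|))\biggr]=\prod_{i=1}^\infty\bigl[1-\alpha+\alpha\,\Exp\,\Lpl_H(\zeta\theta\ell(\sqrt{Y_i}))\bigr],
\]
whose logarithm is bounded above by $-\alpha\sum_i\Exp[1-\Lpl_H(\zeta\theta\ell(\sqrt{Y_i}))]$ via $\log(1-x)\leqsl -x$. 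Swapping sum and expectation and using the elementary identity $\sum_{i\geqsl 1}y^{i-1}/((i-1)!\,\alpha^i)=\ee^{y/\alpha}/\alpha$ collapses this to $-\int_0^\infty(1-\Lpl_H(\zeta\theta/g(t)))\,\dd t$, exactly the Poisson-type integral, so the regular-variation analysis of the previous paragraph applies unchanged.

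The main technical obstacle is the dominated-convergence step: one must produce a $\theta$-uniform integrable envelope for $1-\Lpl_H\bigl(\zeta\theta/g(h(\zeta\theta)v)\bigr)$ in $v\in(0,\infty)$. Near $v=0$ the argument of $\Lpl_H$ is of order $\zeta\theta$, so a naive bound contributes $O(1)$ per unit length of $v$ on a shrinking but not vanishing set, and it is precisely condition~(b) that is used to absorb this region into $o(h(\theta))$. At large $v$, Potter-type bounds on the ratio $g(h(\zeta\theta)v)/g(h(\zeta\theta))$ are required to secure integrability against $1-\Lpl_H(v^{-\beta})\sim \Exp(H_1)v^{-\beta}$. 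Once these envelopes are in place everything else is routine bookkeeping.
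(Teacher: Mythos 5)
Your proposal is correct and follows essentially the same route as the paper's proof: the same Chernoff bound with a tilt $\zeta<\zeta_0$, the same reinsertion of the $i=1$ factor controlled by condition~(b), the same probability-generating-functional computation and regular-variation limit $\int_0^\infty\bigl(1-\Lpl_H(v^{-\beta})\bigr)\,\dd v=\Gamma(1-1/\beta)\,\Exp({H_1}^{1/\beta})$ with a Karamata-representation (Potter-type) envelope for the dominated convergence, and the same Kostlan-based reduction of the $\alpha$-Ginibre case to the Poisson integral. One small correction: condition~(b) plays no role in the small-$v$ part of the dominated-convergence step---there the integrand is simply bounded by $\ind{(0,1]}(v)$, which is integrable and whose contribution persists in the limit integral---it is needed only to show $\log\Lpl_H(\zeta\theta)=o(h(\theta))$ for the reinserted first factor.
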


Note that, if $\zeta_0=\infty$, the SIR tail probability
$\Prb(\SIR_o>\theta)$ decays faster than $\ee^{-\Theta(h(\theta))}$ as
$\theta\to\infty$.

\begin{proof}
Since $F_H$ is light-tailed, Markov's inequality yields
$\Bar{F_H}(x) = \Prb(\ee^{\zeta H_i} > \ee^{\zeta x}) \leqsl
\Mgf_H(\zeta)\,\ee^{-\zeta x}$ for $\zeta \in (0,\zeta_0)$.
Thus, by $\ell(r)\leqsl 1$, \eqref{eq:BarF_H} with replacing
$|X_i|^{-d\beta}$ with $\ell(|X_i|)$ is bounded above as
\begin{align}\label{eq:bound_pathloss}
  \Prb(\SIR_o > \theta)
  &\leqsl \Mgf_H(\zeta)\,
       \Exp\exp\biggl\{
         - \frac{\zeta\,\theta}{\ell(|X_1|)}
           \sum_{i=2}^\infty H_i\,\ell(|X_i|)
       \biggr\}
  \\
  &= \Mgf_H(\zeta)\,
     \Exp\biggl[
        \prod_{i=2}^\infty
          \Lpl_H\biggl(
            \zeta\,\theta\,\frac{\ell(|X_i|)}{\ell(|X_1|)}
          \biggr)
      \biggr]
  \leqsl \frac{\Mgf_H(\zeta)}{\Lpl_H(\zeta\,\theta)}\,
      \Exp\biggl[
        \prod_{i=1}^\infty
          \Lpl_H(\zeta\,\theta\,\ell(|X_i|))
      \biggr].
  \nonumber
\end{align}

Suppose that $\Phi=\{X_i\}_{i\in\N}$ is a homogeneous Poisson point
process.
Since $\SIR_o$ is invariant to the intensity~$\lambda$, we choose
$\lambda = {\pi_d}^{-1}$.
Then, applying the probability generating functional (see, e.g.,
\cite[Sec.~9.4]{DaleVere08}) to the expectation above, we have
\begin{align}\label{eq:bound_Poisson1}
  \Exp\biggl[
    \prod_{i=1}^\infty
      \Lpl_H(\zeta\,\theta\,\ell(|X_i|))
  \biggr]
  &= \exp\biggl\{
       - \frac{1}{\pi_d}
         \int_{\R^d}
           \bigl[
             1 - \Lpl_H(\zeta\,\theta\,\ell(|x|))
           \bigr]\,
         \dd x
     \biggr\}
  \\
  &= \exp\biggl\{
       - d \int_0^\infty
             \bigl[
               1 - \Lpl_H(\zeta\,\theta\,\ell(r))
             \bigr]\,
             r^{d-1}\,
           \dd r
     \biggr\}
  \nonumber\\
  &= \exp\biggl\{
       - \int_0^\infty
           \bigl[
             1 - \Lpl_H(\zeta\,\theta\,\ell(s^{1/d}))
           \bigr]\,
         \dd s
     \biggr\},
  \nonumber
\end{align}
where the last equality follows by the substitution of $s=r^d$.
Here, we set $\theta = g(z) = 1/\ell(z^{1/d})$, $z\geqsl 0$.
Note that $\theta\to\infty$ as $z\to\infty$.
Then, since $h(g(z)) \sim z$ as $z\to\infty$, we have
\begin{align}\label{eq:bound_Poisson2}
  \lim_{\theta\to\infty}
    \frac{1}{h(\theta)}
    \int_0^\infty
      \bigl[
        1 - \Lpl_H\bigl(\zeta\,\theta\,\ell(s^{1/d})\bigr)
      \bigr]\,
    \dd s
  &= \lim_{z\to\infty}
       \frac{1}{z}
       \int_0^\infty
         \biggl[
           1 - \Lpl_H\biggl(
                 \zeta\,\frac{g(z)}{g(s)}
               \biggr)
         \biggr]\,
       \dd s
  \\
  &= \lim_{z\to\infty}
       \int_0^\infty
         \biggl[
           1 - \Lpl_H\biggl(
                 \zeta\,\frac{g(z)}{g(z\,t)}
               \biggr)
         \biggr]\,
       \dd t,
  \nonumber
\end{align}
where $t= s/z$ is substituted in the second equality.
We will confirm later whether the dominated convergence theorem is
applicable in the last expression above and we now admit it.
The regular variation of $g$ with index~$\beta$ then yields
\begin{align}\label{eq:Thm2Prf4}
  \eqref{eq:bound_Poisson2}
  &= \int_0^\infty
       \bigl[1 - \Lpl_H(\zeta\,t^{-\beta})\bigr]\,
     \dd t
  = \zeta^{1/\beta}\,\Exp({H_1}^{1/\beta})\,
     \Gamma\biggl(1-\frac{1}{\beta}\biggr),
\end{align}
where the second equality follows by the similar procedure to
\eqref{eq:prp4prf4}.
Hence, applying \eqref{eq:bound_Poisson1}--\eqref{eq:Thm2Prf4} to
\eqref{eq:bound_pathloss} and taking $\zeta\to\zeta_0$, we obtain
\eqref{eq:bound_Poisson} since $\log\Lpl_H(\zeta\,\theta)/h(\theta)
\to 0$ as $\theta\to\infty$ by Condition~(b) of the theorem.

Let us now show that the dominated convergence theorem is applicable
in \eqref{eq:Thm2Prf4}.
Since $g$ is regularly varying with index~$\beta$, we have $g(z) =
z^\beta\,L_0(z)$ with a slowly varying function~$L_0$, for which we
can take a constant~$B>0$ such that
\[
  L_0(z)
  = \exp\biggl(
      \eta(z) + \int_B^z \frac{\epsilon(u)}{u}\,\dd u
    \biggr),
  \quad z\geqsl B,
\]
where $\eta(z)$ is bounded and converges to a constant as
$z\to\infty$, and $\epsilon(u)$ is bounded and converges to zero
as $u\to\infty$ (see, e.g., \cite[Sec.~1.3]{BingGoldTeug87} or
\cite[Chap.~1]{Sene76}).
We define constants $\eta^*$ and $\epsilon^*$ as
\[
  \eta^* = \sup_{z\geqsl B}|\eta(z)|,
  \qquad
  \epsilon^* = \sup_{z\geqsl B}|\epsilon(z)|.
\]
Note here that we can take $B$ large enough such that $\epsilon^* <
\beta - 1$.
Then, for $z\geqsl B$ and $t\geqsl 1$, we have
\begin{equation}\label{eq:representation}
  \frac{g(z)}{g(z\,t)}
  \leqsl t^{-\beta}\,\ee^{2\eta^*}\,
         \exp\biggl(\epsilon^*\int_z^{zt}\frac{\dd u}{u}\biggr)
  = \ee^{2\eta^*}\,t^{-(\beta-\epsilon^*)},
\end{equation}
so that the integrand of the last expression in
\eqref{eq:bound_Poisson2} satisfies
\[
  1 - \Lpl_H\biggl(
        \zeta\,\frac{g(z)}{g(z\,t)}
      \biggr)
  \leqsl \ind{(0,1]}(t)
         + \bigl[
             1 - \Lpl_H(
                   \zeta\,\ee^{2\eta^*}\,t^{-(\beta-\epsilon^*)}
                 )
           \bigr]\,
           \ind{(1,\infty)}(t).
\]        
Similar to \eqref{eq:prp4prf4} (and \eqref{eq:Thm2Prf4}), the integral
of the second term on the right-hand side above amounts to
\begin{align*}
  \int_1^\infty
    \bigl[
        1 - \Lpl_H(\zeta\,\ee^{2\eta^*}\,t^{-(\beta - \epsilon^*)})
    \bigr]\,
    \dd t
  &\leqsl \Exp\biggl[
            \int_0^\infty
         \bigl(
           1 - \ee^{-\zeta \ee^{2\eta^*} H_1\,t^{-(\beta - \epsilon^*)}}
         \bigr)\,
       \dd t
     \biggr]
  \\
  &= (\zeta\,\ee^{2\eta^*})^{1/(\beta-\epsilon^*)}\,
     \Exp({H_1}^{1/(\beta-\epsilon^*)})\,
     \Gamma\biggl(1-\frac{1}{\beta-\epsilon^*}\biggr)
  < \infty,
\end{align*}
and the dominated convergence theorem is applicable.

Next, we show \eqref{eq:bound_Poisson} when $d=2$ and
$\Phi=\{X_i\}_{i\in\N}$ is an $\alpha$-Ginibre point process.
Recall Proposition~\ref{prp:Kostlan}~(i), which states that
$\{|X_i|^2\}_{i\in\N}$ has the same distribution as
$\{\check{Y}_i\}_{i\in\N}$ and each $\check{Y}_i$ is extracted from
$\{Y_i\}_{i\in\N}$ with probability~$\alpha$ independently, where
$Y_i\sim\Gam(i, \alpha^{-1})$, $i\in\N$, are mutually independent.
Applying this to \eqref{eq:bound_pathloss}, we have
\[
  \Prb(\SIR_o > \theta)
  \leqsl \frac{\Mgf_H(\zeta)}{\Lpl_H(\zeta\,\theta)}
      \prod_{i=1}^\infty
        \Exp\bigl[
          1 - \alpha
          + \alpha\,\Lpl_H\bigl(\zeta\,\theta\,\ell({Y_i}^{1/2})\bigr)
        \bigr],
\]
so that, using $\log x\leqsl x-1$,
\[
  \log\Prb(\SIR_o > \theta)
  \leqsl \log\frac{\Mgf_H(\zeta)}{\Lpl_H(\zeta\,\theta)}
      - \alpha\sum_{i=1}^\infty
          \Exp\bigl[
            1 - \Lpl_H\bigl(\zeta\,\theta\,\ell({Y_i}^{1/2})\bigr)
          \bigr].
\]
Hence, applying the density function of
$Y_i\sim\Gam(i,\alpha^{-1})$, $i\in\N$,
\begin{align*}
  \alpha\sum_{i=1}^\infty
    \Exp\bigl[
      1 - \Lpl_H\bigl(\zeta\,\theta\,\ell({Y_i}^{1/2})\bigr)
    \bigr]
  &= \alpha\sum_{i=1}^\infty
     \int_0^\infty
       \frac{(y/\alpha)^{i-1}\,\ee^{-y/\alpha}}{(i-1)!}\,
       \bigl[
         1 - \Lpl_H(\zeta\,\theta\,\ell(y^{1/2}))
       \bigr]\,
     \dd y
  \\
  &= \int_0^\infty
       \bigl[
         1 - \Lpl_H(\zeta\,\theta\,\ell(y^{1/2}))
       \bigr]\,
     \dd y,
\end{align*}
which is the same expression as the exponent of
\eqref{eq:bound_Poisson1} and leads to the same result.
\end{proof}  

\begin{remark}
We can see that many practical distributions satisfy Condition~(b) of
Theorem~\ref{thm:bound_pathloss}.
Since $\Lpl_H(s) \geqsl \Exp\bigl(\ee^{-sH_1}\,\ind{\{H_1\leqsl 1/s\}}\bigr)
\geqsl \ee^{-1}\,F_H(1/s)$, we have $|\log\Lpl_H(s)| \leqsl 1-\log
F_H(1/s)$.
Thus, for example, if $F_H(x) \geqsl c\,x^a$ for $x\in[0,\epsilon]$ with
$c>0$, $a\geqsl0$ and $\epsilon>0$, then $|\log\Lpl_H(s)| = O(\log s)$ as
$s\to\infty$.
On the other hand, a counterexample is such that there exists a
constant~$\epsilon>0$ with $F_H(\epsilon)=0$.
Then, $\Lpl_H(s)\leqsl \ee^{-\epsilon s}$ and we have
$|\log\Lpl_H(s)|\geqsl\epsilon\,s$.
\end{remark}

\begin{remark}
When $F_H=\mathrm{Exp}(1)$, we have $\Mgf_H(s) = (1-s)^{-1}$, $s<1$.
In this case, $\zeta_0=1$ and $\Exp({H_1}^{1/\beta}) =
\Gamma(1+1/\beta)$ in Theorem~\ref{thm:bound_pathloss}, so that, by
$\Gamma(1+1/\beta)\,\Gamma(1-1/\beta) = (\pi/\beta)\,\csc(\pi/\beta)$,
Theorem~\ref{thm:bound_pathloss} reduces to
\begin{equation}\label{eq:bound_Exp}
  \limsup_{\theta\to\infty}
    \frac{1}{h(\theta)}\,\log\Prb(\SIR_o>\theta)
    \leqsl - \frac{\pi}{\beta}\,\csc\frac{\pi}{\beta}.
\end{equation}
\end{remark}

When $F_H=\mathrm{Exp}(1)$ and $\Phi$ is a homogeneous Poisson point
process with positive and finite intensity, we can show that the tail
distribution of the SIR has a logarithmically asymptotic lower bound
which has the same order as the upper bound~\eqref{eq:bound_Exp}.

\begin{proposition}
For the cellular network model with the path-loss function satisfying
Assumption~\ref{asm:bounded}, when $F_H = \mathrm{Exp}(1)$ and $\Phi =
\{X_i\}_{i\in\N}$ is a homogeneous Poisson point process on $\R^d$
with positive and finite intensity, we have
\begin{equation}\label{eq:lowerbound_Poisson}
  \liminf_{\theta\to\infty}
    \frac{1}{h(\theta)}\,\log\Prb(\SIR_o > \theta)
  \geqsl - \Exp[\ell(|X_1|)^{-1/\beta}]\,
        \frac{\pi}{\beta}\,\csc\frac{\pi}{\beta}.
\end{equation}
\end{proposition}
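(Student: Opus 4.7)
The plan is to exploit the tractability of the Poisson/Rayleigh setting: $\Prb(\SIR_o>\theta)$ admits an explicit representation as an expectation of an exponential, and Jensen's inequality reduces the problem to the asymptotic analysis of a single deterministic functional.

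By the $\lambda$-invariance of $\SIR_o$ we may fix $\lambda = 1/\pi_d$, so that the density of $|X_1|$ is $d\,r^{d-1}\,\ee^{-r^d}$. Conditioning on $X_1 = x$, the remaining points $\{X_i\}_{i\geqsl 2}$ form a Poisson process on $\{y\in\R^d:|y|>|x|\}$ independent of $\{H_i\}_{i\geqsl 2}$; since $\Lpl_H(s) = 1/(1+s)$ for the exponential distribution, the Poisson probability generating functional gives
\[
  \Prb(\SIR_o>\theta\mid X_1=x)
  = \ee^{-g_\theta(|x|)},
  \qquad
  g_\theta(r) := d\int_r^\infty\frac{\theta\,\ell(s)}{\ell(r)+\theta\,\ell(s)}\,s^{d-1}\,\dd s.
\]
Applying Jensen's inequality to the convex map $\ee^{-x}$ yields $\Prb(\SIR_o>\theta) = \Exp[\ee^{-g_\theta(|X_1|)}] \geqsl \ee^{-\Exp[g_\theta(|X_1|)]}$, so it suffices to prove
\[
  \lim_{\theta\to\infty}\frac{\Exp[g_\theta(|X_1|)]}{h(\theta)}
  = \Exp\bigl[\ell(|X_1|)^{-1/\beta}\bigr]\,\frac{\pi}{\beta}\csc\frac{\pi}{\beta}.
\]

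The pointwise limit $g_\theta(r)/h(\theta)\to\ell(r)^{-1/\beta}(\pi/\beta)\csc(\pi/\beta)$ is obtained, as in the proof of Theorem~\ref{thm:bound_pathloss}, via the substitutions $u=s^d$ and then $u=h(\theta)\,t$: with $\theta/g(h(\theta)t)\to t^{-\beta}$ by the regular variation of $g(s)=1/\ell(s^{1/d})$ of index $\beta$ (and $g(h(\theta))\sim\theta$), the integrand converges to $1/(1+\ell(r)\,t^\beta)$, whose integral over $(0,\infty)$ equals $\ell(r)^{-1/\beta}(\pi/\beta)\csc(\pi/\beta)$ via $w=\ell(r)^{1/\beta}t$ and the standard identity $\int_0^\infty(1+w^\beta)^{-1}\,\dd w=(\pi/\beta)\csc(\pi/\beta)$.

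The main obstacle is exhibiting a $\theta$-uniform dominating function integrable against the law of $|X_1|$. I would split the integral defining $g_\theta(r)$ at the point $R$ with $\theta\,\ell(R)=\ell(r)$, i.e.\ $R^d = h(\theta/\ell(r))$. Using $x/(1+x)\leqsl 1$ on $[r,R]$ and $x/(1+x)\leqsl x$ on $[R,\infty)$, combined with Karamata's theorem applied to the regularly varying $\ell$ of index $-d\beta$ with $\beta>1$ to obtain $d\int_R^\infty\ell(s)s^{d-1}\,\dd s \leqsl C\,\ell(R)\,R^d$ for $R$ large, one deduces $g_\theta(r)\leqsl C_1\,h(\theta/\ell(r))$ uniformly for $\theta$ beyond a threshold (since $\ell(r)\leqsl 1$ forces $R$ to be large once $\theta$ is). Potter's bound for the regularly varying $h$ of index $1/\beta$ then yields, for any $\epsilon>0$ and $\theta$ sufficiently large, $h(\theta/\ell(r))/h(\theta)\leqsl A\,\ell(r)^{-(1/\beta+\epsilon)}$. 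Because $\ell(r)^{-(1/\beta+\epsilon)}$ grows only polynomially in $r^d$ while the density of $|X_1|$ is super-exponential, the dominating function $C_1 A\,\ell(r)^{-(1/\beta+\epsilon)}$ is integrable against the law of $|X_1|$, and dominated convergence delivers the required limit, completing the proof.
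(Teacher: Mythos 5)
Your proposal is correct and follows essentially the same route as the paper: the explicit Rayleigh/Poisson product representation, Jensen's inequality to move the expectation into the exponent, the probability generating functional yielding exactly the quantity $\Exp[g_\theta(|X_1|)]$, and regular-variation asymptotics giving the limit $\Exp[\ell(|X_1|)^{-1/\beta}]\,(\pi/\beta)\csc(\pi/\beta)$. The only difference is cosmetic: you justify dominated convergence by splitting the integral at the crossover radius and invoking Karamata's and Potter's theorems, whereas the paper bounds $g(z)/g(zt)$ via the Karamata representation of the slowly varying factor and then checks $\Exp[\ell(|X_1|)^{-1/(\beta-\epsilon^*)}]<\infty$ — two equivalent uses of the same regular-variation toolbox.
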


\begin{proof}
Applying $\Bar{F_H}(x) = \ee^{-x}$, $x\geqsl0$, and $\Lpl_H(s) =
(1+s)^{-1}$, $s\geqsl0$, we rewrite \eqref{eq:BarF_H} as
\[
  \Prb(\SIR_o > \theta)
  = \Exp\biggl[
      \prod_{i=2}^\infty
        \biggl(
          1 + \theta\,\frac{\ell(|X_i|)}{\ell(|X_1|)}
        \biggr)^{-1}
    \biggr].
\]
Let the intensity of the Poisson point process be
$\lambda={\pi_d}^{-1}$.
By concavity of logarithmic functions, Jensen's inequality yields
\begin{align*}
  \log\Prb(\SIR_o > \theta)
  &\geqsl \Exp\biggl[
         \log\Exp\biggl[
           \prod_{i=2}^\infty
             \biggl(
               1 + \theta\,\frac{\ell(|X_i|)}{\ell(|X_1|)}
             \biggr)^{-1}
         \biggm| |X_1|\biggr]
     \biggr]
  \\
  &= -\frac{1}{\pi_d}\,
      \Exp\biggl[
        \int_{|x|>|X_1|}
          \biggl\{
            1 - \biggl(1 + \theta\,\frac{\ell(|x|)}{\ell(|X_1|)}\biggr)^{-1}
          \biggr\}\,
        \dd x
      \biggr]
  \\
  &= -\Exp\biggl[
        \int_{|X_1|^d}^\infty
          \biggl\{
            1 - \biggl(1 + \theta\,\frac{\ell(s^{1/d})}{\ell(|X_1|)}\biggr)^{-1}
          \biggr\}\,
        \dd s
      \biggr],
\end{align*}
where we apply the probability generating functional to the
conditional expectation given $|X_1|$ in the first equality and use
the similar procedure to \eqref{eq:bound_Poisson1} in the last
equality.
Here, similar to the proof of Theorem~\ref{thm:bound_pathloss}, we set
$\theta = g(z)$.
Then, since $h(g(z)) \sim z$ as $z\to\infty$, we have
\begin{align}\label{eq:lower_prf1}
  \liminf_{\theta\to\infty}
    \frac{1}{h(\theta)}\,
    \log\Prb(\SIR_o > \theta)
  &\geqsl -\limsup_{z\to\infty}
         \frac{1}{z}\,
         \Exp\biggl[
           \int_{|X_1|^d}^\infty
             \biggl\{
               1 - \biggl(
                     1 + \frac{1}{\ell(|X_1|)}\,
                         \frac{g(z)}{g(s)}
                   \biggr)^{-1}
             \biggr\}\,
           \dd s
         \biggr]
  \\
  &= -\limsup_{z\to\infty}
        \Exp\biggl[
           \int_{|X_1|^d/z}^\infty
             \biggl\{
               1 - \biggl(
                     1 + \frac{1}{\ell(|X_1|)}\,
                         \frac{g(z)}{g(z\,t)}
                   \biggr)^{-1}
             \biggr\}\,
           \dd t
        \biggr],
  \nonumber
\end{align}
where $t = s/z$ is substituted in the last equality.
We will confirm later that the dominated convergence theorem is
applicable to the above and we have
\begin{equation}\label{eq:lower_prf2}
  \lim_{z\to\infty}
     \Exp\biggl[
       \int_{|X_1|^d/z}^\infty
         \biggl\{
           1 - \biggl(
                 1 + \frac{1}{\ell(|X_1|)}\,
                     \frac{g(z)}{g(z\,t)}
               \biggr)^{-1}
         \biggr\}\,
       \dd t
     \biggr]
  = \Exp\biggl[
       \int_0^\infty
         \frac{\dd t}{1+\ell(|X_1|)\,t^\beta}
     \biggr].
\end{equation}
Furthermore, substituting $u = \ell(|X_1|)\,t^\beta$,
\begin{equation}\label{eq:lower_prf3}
  \int_0^\infty
    \frac{\dd t}{1+\ell(|X_1|)\,t^\beta}
  = \frac{1}{\beta\,\ell(|X_1|)^{1/\beta}}
    \int_0^\infty
      \frac{u^{1/\beta-1}}{1+u}\,
    \dd u
  = \frac{1}{\ell(|X_1|)^{1/\beta}}\,
    \frac{\pi}{\beta}\,\csc\frac{\pi}{\beta},
\end{equation}
which, together with \eqref{eq:lower_prf1} and \eqref{eq:lower_prf2},
leads to \eqref{eq:lowerbound_Poisson}.

It remains to show whether the dominated convergence theorem is
applicable in \eqref{eq:lower_prf2}.
Applying inequality~\eqref{eq:representation}, we have
\begin{align*}
  \biggl\{
    1 - \biggl(
          1 + \frac{1}{\ell(|X_1|)}\,
              \frac{g(z)}{g(z\,t)}
        \biggr)^{-1}
  \biggr\}\,\ind{\{|X_1|\leqsl (zt)^{1/d}\}}
  &\leqsl \ind{(0,1]}(t)
          + \Bigl(
              1 + \ee^{-2\eta^*}\,\ell(|X_1|)\,t^{\beta-\epsilon^*}
            \Bigr)^{-1}\,
            \ind{(1,\infty)}(t).
\end{align*}
Similar to \eqref{eq:lower_prf3}, the integral of the second term on
the right-hand side above amounts to
\begin{align*}
  \int_1^\infty
    \frac{\dd t}
         {1 + \ee^{-2\eta^*}\,\ell(|X_1|)\,t^{\beta-\epsilon^*}}
  &\leqsl \frac{\ee^{2\eta^*/(\beta-\epsilon^*)}}
               {(\beta - \epsilon^*)\,\ell(|X_1|)^{1/(\beta-\epsilon^*)}}
          \int_0^\infty
            \frac{u^{-1+1/(\beta-\epsilon^*)}}{1+u}\,
          \dd u
  \\
  &= \frac{\ee^{2\eta^*/(\beta-\epsilon^*)}}
          {\ell(|X_1|)^{1/(\beta-\epsilon^*)}}\,
     \frac{\pi}{\beta-\epsilon^*}\,\csc\frac{\pi}{\beta-\epsilon*}.
\end{align*}
It then suffices to show that
$\Exp[\ell(|X_1|)^{-1/(\beta-\epsilon^*)}] < \infty$.
The regular variation of $\ell$ with index $-d\beta$ implies that
$\ell(r) = r^{-d\beta}\,\Tilde{L}_0(r)$ with another slowly varying
function~$\Tilde{L}_0$, for which we can take a constant~$\Tilde{B}>1$
such that
\[
  \Tilde{L}_0(r)
  = \exp\biggl(
      \Tilde{\eta}(r)
      + \int_{\Tilde{B}}^r \frac{\Tilde{\epsilon}(t)}{t}\,\dd t
    \biggr),
 \quad r\geqsl \Tilde{B},
\]
where $\Tilde{\eta}(r)$ is bounded and converges to a constant as
$r\to\infty$, and $\Tilde{\epsilon}(t)$ is bounded and converges
to zero as $t\to\infty$.
Define constants~$\Tilde{\eta}_\beta$ and $\Tilde{\epsilon}_\beta$ as
\[
  \Tilde{\eta}_\beta
  = \sup_{x\geqsl\Tilde{B}}\frac{|\Tilde{\eta}(x)|}{\beta-\epsilon^*},
  \qquad
  \Tilde{\epsilon}_\beta
  = \sup_{x\geqsl\Tilde{B}}\frac{|\Tilde{\epsilon}(x)|}{\beta-\epsilon^*}.
\]
Since $\ell$ is nonincreasing, we have
\begin{align}\label{eq:last}
  \ell(|X_1|)^{-1/(\beta-\epsilon^*)}
  &\leqsl \ell(\Tilde{B})^{-1/(\beta-\epsilon^*)}\,
          \ind{[0,\Tilde{B}]}(|X_1|)
          + |X_1|^d\,\Tilde{L}_0(|X_1|)^{-1/(\beta-\epsilon^*)}\,
            \ind{(\Tilde{B},\infty)}(|X_1|)
  \\
  &\leqsl \ell(\Tilde{B})^{-1/(\beta-\epsilon^*)}
          + \ee^{\Tilde{\eta}_\beta}\,|X_1|^{d+\Tilde{\epsilon}_\beta},
  \nonumber
\end{align}
which completes the proof since $|X_1|$ has any order of moments.
\end{proof}

\begin{remark}
During the preparation of the first draft, the authors have found that
the result in \cite[Sec.~IV-B]{GuoHaenGant16} corresponds to our
\eqref{eq:bound_Exp} and \eqref{eq:lowerbound_Poisson} for the
homogeneous Poisson-based model with $d=2$, $F_H=\mathrm{Exp}(1)$ and
$\ell(r) = (1 + r^{2\beta})^{-1}$.
We here deal with a much wider class of path-loss functions than that
of power-law decaying functions.
\end{remark}

\begin{remark}
The results in this section hold when we relax the nonincreasing
property of $\ell$ in Assumption~\ref{asm:bounded} such that, for any
finite $\Tilde{B}>0$, there exists an $\varepsilon_{\Tilde{B}}>0$ such
that $\ell(r)\geqsl\varepsilon_{\Tilde{B}}$ for $r\in[0,\Tilde{B}]$,
as we remain to assume the boundedness and regular variation.
In this case, the proofs remain the same except for replacing
$\ell(\Tilde{B})$ in~\eqref{eq:last} with $\varepsilon_{\Tilde{B}}$.
\end{remark}


\end{document}